\providecommand{\U}[1]{\protect\rule{.1in}{.1in}}
\newtheorem{theorem}{Theorem}
\newtheorem{corollary}[theorem]{Corollary}
\newtheorem{proposition}[theorem]{Proposition}
\newtheorem{remark}[theorem]{Remark}
\newcommand\supp{\mathop{\rm supp}}
\begin{document}

\title[Fractional series operators on discrete Hardy spaces]{Fractional series operators on discrete Hardy spaces}
\author{Pablo Rocha}
\address{Departamento de Matem\'atica, Universidad Nacional del Sur, Bah\'{\i}a Blanca, 8000 Buenos Aires, Argentina.}
\email{pablo.rocha@uns.edu.ar}
\thanks{\textbf{Key words and phrases}: Discrete Hardy Spaces, Atomic Decomposition, Fractional Series.}
\thanks{\textbf{2.010 Math. Subject Classification}: 42B30, 42B25.}

\begin{abstract}
For $0 \leq \gamma < 1$ and a sequence $b=\{ b(i) \}_{i \in \mathbb{Z}}$ we consider the fractional operator $T_{\alpha, \beta}$ defined formally by 
$$(T_{\alpha, \beta} \, b)(j) = \sum_{i \neq \pm j} \frac{b(i)}{|i-j|^{\alpha} |i+j|^{\beta}} \,\,\,\,\,\, (j \in \mathbb{Z}),$$
where $\alpha, \beta > 0$ and $\alpha + \beta = 1 - \gamma$. The main aim of this note is to prove that the operator $T_{\alpha, \beta}$ is bounded from $H^{p}(\mathbb{Z})$ into $\ell^{q}(\mathbb{Z})$ for $0 < p < \frac{1}{\gamma}$ and $\frac{1}{q} = \frac{1}{p} - \gamma$. For 
$\alpha = \beta = \frac{1-\gamma}{2}$ we show that there exists $\epsilon \in \left(0, \frac{1}{3} \right)$ such that for every 
$0 \leq \gamma < \epsilon$ the operator $T_{\frac{1-\gamma}{2}, \frac{1-\gamma}{2}}$ is not bounded from $H^{p}(\mathbb{Z})$ into 
$H^{q}(\mathbb{Z})$ for $0 < p \leq \frac{1}{1 + \gamma}$ and $\frac{1}{q} = \frac{1}{p} - \gamma$.
\end{abstract}

\maketitle

\section{Introduction}

In this note, sequences considered are complex-valued unless otherwise explicitly stated. For a sequence 
$b= \{ b(i) \}_{i \in \mathbb{Z}}$, let
\[
\| b \|_{\ell^{p}(\mathbb{Z})} = \left\{\begin{array}{cc}
                  \left( \displaystyle{\sum_{i=-\infty}^{+\infty}} |b(i)|^{p} \right)^{1/p}, &  0 < p < \infty \\
                  \displaystyle{\sup_{i \in \mathbb{Z}}} \, |b(i)| \, , & \,\,\,\, p = \infty
                \end{array}. \right.
\]
A sequence $b$ is said to belong to $\ell^{p}(\mathbb{Z})$, $0 < p \leq \infty$, if $\| b \|_{\ell^{p}(\mathbb{Z})} < +\infty$.

At a Conference held in the summer of 1907, D. Hilbert announced the following inequality
\begin{equation} \label{Hilbert}
\sum_{i=1}^{\infty}\sum_{j=1}^{\infty} \frac{b(i) b(j)}{i+j} \leq 2\pi \sum_{i=1}^{\infty} [b(i)]^{2}
\end{equation}
for $b(i) \geq 0$ and $\sum_{i=1}^{\infty} [b(i)]^{2} < \infty$. Hilbert's proof was outlined by H. Weyl in his Inaugural-Dissertation 
(see \cite{Weyl}, pp. 83). Others proofs of (\ref{Hilbert}), essentially different from each other, have been published in \cite{Wiener},
\cite{Schur}, and \cite{Hardy1} (see also \cite{Hardy}, pp. 235-236).
The inequality in (\ref{Hilbert}) may be regarded as the starting point of all researches in the discrete setting.

Given a sequence $b = \{ b(i) \}_{i \in \mathbb{Z}}$ its Hilbert sequence $\widetilde{b}$ is defined by
\[
\widetilde{b}(j) = \frac{1}{\pi} \sum_{i \in \mathbb{Z} \setminus \{j\}} \frac{b(i)}{i-j}.
\]
This operator is known as \textit{discrete Hilbert transform}, such an operator was introduced by D. Hilbert in 1909. There are other operators with the same name. For instance, E. C. Titchmarsh in \cite{Titch} studied the behavior of the sequence
\begin{equation} \label{discrete Hilbert}
c(j) = \frac{1}{\pi} \sum_{i=-\infty}^{\infty} \frac{b(i)}{j+i+\frac{1}{2}}, \,\,\,\,\,\, (j \in \mathbb{Z}).
\end{equation}
\\
More precisely, he proved that if $b = \{ b(i) \}_{i \in \mathbb{Z}} \in \ell^{p}(\mathbb{Z})$, $1 < p < \infty$, then 
$c = \{ c(j) \}_{j \in \mathbb{Z}}$, given by (\ref{discrete Hilbert}), belongs to $\ell^{p}(\mathbb{Z})$ with 
$\| c \|_{\ell^{p}(\mathbb{Z})} \leq N_p \| b \|_{\ell^{p}(\mathbb{Z})}$, where $N_p$ is a number depending only on $p$. This result allowed him to obtain, by passing to the limit, that the Hilbert transform $\mathcal{H}$ defined by

\[
\mathcal{H}f(x) = p.v. \, \frac{1}{\pi}\int_{-\infty}^{\infty} \frac{f(t)}{t-x} dt, \,\,\,\,\,\,\, (x \in \mathbb{R}),
\] 
\\
is bounded on $L^{p}(\mathbb{R})$, $1 < p < \infty$ (see also \cite{Riesz}). The sequence in (\ref{discrete Hilbert}) is also known as the \textit{discrete Hilbert transform} of the sequence $b = \{ b(i) \}_{i \in \mathbb{Z}}$. For the sequel, we will consider the discrete Hilbert transform $H$ of a sequence $b = \{ b(i) \}_{i \in \mathbb{Z}}$ given by

\[
(Hb)(j) = \frac{1}{\pi} \sum_{i=-\infty}^{\infty} \frac{b(i)}{j+i+\frac{1}{2}},  \,\,\,\,\,\, (j \in \mathbb{Z}).
\]
\\
E. C. Titchmarsh also proved that $H(Hb) = b$ if $b \in \ell^{p}(\mathbb{Z})$, $1 < p <\infty$.

In \cite{HLP} (cf. also \cite{Hardy}, pp. 288), G. H. Hardy, J. E. Littlewood and G. P\'olya proved the following inequality
\begin{equation} \label{HLP ineq}
\left| \mathop{\sum \sum}_{i \neq j} |i-j|^{-\lambda} b(i) c(j) \right| \leq C \| b \|_{\ell^{p}(\mathbb{Z})} \| c \|_{\ell^{q}(\mathbb{Z})}
\end{equation}
for
\[
p > 1, \,\,\,\, q > 1, \,\,\,\, \frac{1}{p} + \frac{1}{q} > 1, \,\,\,\, \lambda = 2 - \frac{1}{p} - \frac{1}{q} \,\,\,\, 
(\text{so that} \,\, 0 < \lambda < 1).
\]
With this result H. G. Hardy and J. E. Littlewood in \cite{H-L} obtained the corresponding fractional integral theorem.

Given $0 < \gamma < 1$ and a sequence $b =\{ b(i) \}_{i \in \mathbb{Z}}$, we define the \textit{discrete Riesz potential} $I_{\gamma}$ by
\[
(I_{\gamma}b)(j) = \sum_{i \neq j} \frac{b(i)}{|i-j|^{1- \gamma}},  \,\,\,\,\,\, (j \in \mathbb{Z}).
\]
Taking $\lambda= 1 - \gamma$ in (\ref{HLP ineq}), it follows that the operator $I_{\gamma}$ is bounded from 
$\ell^{p}(\mathbb{Z})$ into $\ell^{q}(\mathbb{Z})$ for $1 < p < \gamma^{-1}$ and $\frac{1}{q} =  \frac{1}{p} - \gamma$.

The discrete Hardy space $H^{p}(\mathbb{Z})$, $0 < p < \infty$, consists of all sequences 
$b=\{ b(i) \}_{i \in \mathbb{Z}} \in \ell^{p}(\mathbb{Z})$ which satisfy $Hb \in \ell^{p}(\mathbb{Z})$. The "norm" of 
$b \in H^{p}(\mathbb{Z})$, $0 < p < \infty$, is defined as 

\begin{equation} \label{norm Hp}
\| b \|_{H^{p}(\mathbb{Z})} = \| b \|_{\ell^{p}(\mathbb{Z})} + \| Hb \|_{\ell^{p}(\mathbb{Z})}. 
\end{equation}

\

From (\ref{norm Hp}) and the boundedness on $\ell^{p}(\mathbb{Z})$, $1 < p < \infty$, of the discrete Hilbert transform $H$ it obtains that
$\| b \|_{\ell^{p}(\mathbb{Z})} \leq  \| b \|_{H^{p}(\mathbb{Z})}  \leq C_p \| b \|_{\ell^{p}(\mathbb{Z})}$, with $C_p$ independent of $b$, so
$H^{p}(\mathbb{Z}) = \ell^{p}(\mathbb{Z})$ when $1 < p < \infty$. For the range $0 < p \leq 1$, C. Eoff in \cite{Eoff} proved that 
$H^{p}(\mathbb{Z})$ is isomorphic to the Paley-Wiener space of entire functions $f$ of exponential type $\pi$ for which 
$\int_{\mathbb{R}} |f(x)|^{p} dx < +\infty$. 

By definition, it is clear that for $0 < p \leq 1$ the operator $H$ is bounded from $H^{p}(\mathbb{Z})$ into $\ell^{p}(\mathbb{Z})$, with
$\| Hb \|_{\ell^{p}(\mathbb{Z})} \leq \| b \|_{H^{p}(\mathbb{Z})}$ for all $b \in H^{p}(\mathbb{Z})$. Since 
$H^{p}(\mathbb{Z}) \subset \ell^{p}(\mathbb{Z}) \subset \ell^{2}(\mathbb{Z})$, for $0 < p \leq 1$, we have that $H(Hb) = b$ for every
$b \in H^{p}(\mathbb{Z})$. Thus, $H$ is a bounded operator on $H^{p}(\mathbb{Z})$, $0 < p \leq 1$, with
$\| Hb \|_{H^{p}(\mathbb{Z})} = \| b \|_{H^{p}(\mathbb{Z})}$ for all $b \in H^{p}(\mathbb{Z})$.

In \cite{Boza}, S. Boza and M. Carro proved equivalent definitions of the norms in $H^{p}(\mathbb{Z})$, $0 < p \leq 1$. One of their main goals is the discrete atomic decomposition of elements in $H^{p}(\mathbb{Z})$. By means of the atomic decomposition one can prove that the discrete Riesz potential $I_{\gamma}$ is bounded from $H^{p}(\mathbb{Z})$ into $\ell^{q}(\mathbb{Z})$ for $0 < p \leq 1$ and 
$\frac{1}{q}=\frac{1}{p} - \gamma$ (see Remark \ref{Igamma} below). 

Later, Y. Kanjin and M. Satake in \cite{Kanjin} obtained the molecular decomposition for members in $H^{p}(\mathbb{Z})$, $0 < p \leq 1$
(see also \cite{Komori}). As an application of the molecular decomposition they proved that the operator $I_{\gamma}$ is bounded from 
$H^{p}(\mathbb{Z})$ into $H^{q}(\mathbb{Z})$ for $0 < p \leq 1$ and $\frac{1}{q}=\frac{1}{p} - \gamma$.

Recently, Kwok-Pun Ho in \cite{Ho} generalized the famous discrete Hardy inequality to $0 < p \leq 1$, by using the atomic decomposition characterization of discrete Hardy spaces. For more results about discrete Hardy spaces see \cite{Liflyand} and references therein.

\

Let $0 \leq \gamma < 1$ and let $b = \{ b(i) \}_{i \in \mathbb{Z}}$ be a sequence. We define the fractional series operator 
$T_{\alpha, \beta}$ by
\begin{equation} \label{Tgamma}
(T_{\alpha, \beta} \, b)(j) = \sum_{i \neq \pm j} \frac{b(i)}{|i-j|^{\alpha} |i+j|^{\beta}}, \,\,\,\,\,\, (j \in \mathbb{Z}),
\end{equation}
\\
where $\alpha, \beta > 0$ and $\alpha + \beta = 1 - \gamma$. This operator is a discrete version of the following fractional type integral operator defined on $\mathbb{R}^{n}$

\begin{equation} \label{Talfa}
T_{\alpha}f(x) = \int_{\mathbb{R}^{n}} |x-A_1 y|^{-\alpha_1} \cdot \cdot \cdot |x- A _m y|^{-\alpha_m} f(y) \, dy,
\end{equation}
\\
where $0 \leq \alpha < n$, $m \in \mathbb{N} \cap \left(1 - \frac{\alpha}{n}, +\infty \right)$, the $\alpha_j$'s are positive constants such that $\alpha_1 + \cdot \cdot \cdot + \alpha_m = n - \alpha$ and the $A_j$'s are certain
$n \times n$ invertible matrices. The behavior of this kind of operators on classical and variable Hardy spaces was studied by the author and M. Urciuolo in \cite{Ro1}, \cite{Ro2} and \cite{Ro3}.

The germ of the operator in (\ref{Talfa}) appears in \cite{Ricci}, there F. Ricci and P. Sj\"ogren obtained the boundedness on 
$L^{p}(\mathbb{H}_1)$, $1 < p \leq +\infty$, for a family of maximal operators on the three dimensional Heisenberg group
$\mathbb{H}_1$. To get this result, they studied the $L^{2}(\mathbb{R})$ boundedness of the operator
\[
Tf(x) = \int_{\mathbb{R}} |x-y|^{\alpha-1}|(\beta-1)x- \beta y|^{-\alpha} f(y) \, dy,
\] 
for $\beta \neq 0, 1$ and $0 < \alpha < 1$.

The purpose of this note is to prove the $H^{p}(\mathbb{Z})$ - $\ell^{q}(\mathbb{Z})$ boundedness of the operator $T_{\alpha, \beta}$, given in (\ref{Tgamma}), for $0 < p < \gamma^{-1}$ and $\frac{1}{q} = \frac{1}{p} - \gamma$. We also prove that there exists 
$\epsilon \in \left(0, \frac{1}{3} \right)$ such that, for every $0 \leq \gamma < \epsilon$, the operator 
$T_{\frac{1-\gamma}{2}, \frac{1-\gamma}{2}}$ is not bounded from $H^{p}(\mathbb{Z})$ into $H^{q}(\mathbb{Z})$ for 
$0 < p \leq \frac{1}{1 + \gamma}$ and $\frac{1}{q} = \frac{1}{p} - \gamma$. This is an important difference between the discrete Hilbert transform $H$ and  $T_{\frac{1}{2}, \frac{1}{2}}$, and the discrete Riesz potential $I_{\gamma}$ and 
$T_{\frac{1-\gamma}{2}, \frac{1-\gamma}{2}}$ for $0 < \gamma < \epsilon$.

\

In Section 2, we introduce the discrete maximal and we recall the atomic decomposition of discrete Hardy spaces given in \cite{Boza}. We also state two corollaries, which are a consequence of the atomic decomposition. These two corollaries are necessary to make our 
counter-example.

In Section 3, we obtain the $H^{p}(\mathbb{Z})$ - $\ell^{q}(\mathbb{Z})$ boundedness of the operator $T_{\alpha, \beta}$, for 
$0 \leq \gamma <1$, $0 < p < \gamma^{-1}$ and $\frac{1}{q} = \frac{1}{p} - \gamma$.

In section 4, we give a counter-example which proves that $T_{\alpha, \beta}$ is not bounded from $H^{p}(\mathbb{Z})$ into 
$H^{q}(\mathbb{Z})$.

\

{\bf Notation.} Throughout this paper, $C$ will denote a positive real constant not necessarily the same at each occurrence. We set 
$\mathbb{N}_0 = \mathbb{N} \cup \{0\}$. With $\#A$ we denote the cardinality of a set $A \subset \mathbb{Z}$. Given a real number 
$s \geq 0$, we write $\lfloor s \rfloor$ for the integer part of $s$.

\section{Preliminaries}

Given a sequence $b = \{ b(i) \}_{i \in \mathbb{Z}}$ we define the centered maximal sequence $Mb$ by
\[
(Mb)(j) = \sup_{N \in \mathbb{N}_0} \frac{1}{2N+1} \sum_{|i-j|\leq N} |b(i)|.
\]
It is clear that $\| Mb \|_{\ell^{\infty}(\mathbb{Z})} \leq \|b \|_{\ell^{\infty}(\mathbb{Z})}$. Since 
$\ell^{p}(\mathbb{Z}) \subset \ell^{\infty}(\mathbb{Z})$ for every $0 < p < \infty$, it follows that if $b \in \ell^{p}(\mathbb{Z})$, 
$0 < p \leq \infty$, then $(Mb)(j) < +\infty$ for all $j \in \mathbb{Z}$.

The following result is a consequence of the harmonic analysis on spaces of homogeneous type applied to the space 
$(\mathbb{Z}, \mu, | \cdot|)$ where $\mu$ is the counting measure and $| \cdot |$ is the distance in $\mathbb{Z}$ (see \cite{Deng} or 
\cite{St}). We omit its proof.

\begin{theorem}\label{maximal} Let $b = \{ b(i) \}_{i \in \mathbb{Z}}$ be a sequence.
\begin{enumerate}
\item[(a)] If $b \in \ell^{1}(\mathbb{Z})$, then for every $\alpha > 0$
\[
\#\{ j : (Mb)(j) > \alpha \} \leq \frac{C}{\alpha} \| b \|_{\ell^{1}(\mathbb{Z})},
\]
where $C$ is a positive constant which does not depend on $\alpha$ and $b$.

\item[(b)] If $b \in \ell^{p}(\mathbb{Z})$, $1 < p \leq \infty$, then $Mb \in \ell^{p}(\mathbb{Z})$ and
\[
\| Mb \|_{\ell^{p}(\mathbb{Z})} \leq C_p \| b \|_{\ell^{p}(\mathbb{Z})},
\]
where $C_p$ depends only on $p$. 
\end{enumerate}
\end{theorem}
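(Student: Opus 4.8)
The plan is to prove the two assertions exactly as for the Hardy--Littlewood maximal function on $\mathbb{R}^n$, specialized to the counting measure on $\mathbb{Z}$, for which balls are simply intervals of odd cardinality $2N+1$. Part (a) will come from a Vitali covering argument, and part (b) will follow by interpolating the weak estimate of (a) against the trivial $\ell^\infty$ bound already recorded in the text.

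For part (a), first I would fix $\alpha>0$ and set $E_\alpha=\{j\in\mathbb{Z}:(Mb)(j)>\alpha\}$. For each $j\in E_\alpha$ there is an integer $N_j\in\mathbb{N}_0$ such that, writing $I_j:=\{i\in\mathbb{Z}:|i-j|\le N_j\}$, one has
\[
\#I_j < \frac{1}{\alpha}\sum_{i\in I_j}|b(i)|, \qquad I_j:=\{i\in\mathbb{Z}:|i-j|\le N_j\},
\]
since $\#I_j=2N_j+1$ is precisely the denominator appearing in the definition of $(Mb)(j)$. Because $E_\alpha$ is a priori only known to be countable, I would argue on an arbitrary finite subset $F\subset E_\alpha$ and apply the one-dimensional Vitali covering lemma to the finite family $\{I_j\}_{j\in F}$, greedily selecting intervals of decreasing radius: this produces a pairwise disjoint subfamily $I_{j_1},\dots,I_{j_m}$ with $\bigcup_{j\in F}I_j\subset\bigcup_{\ell=1}^m 3I_{j_\ell}$, where $3I_{j_\ell}$ is the interval with the same center and triple radius, so that $\#(3I_{j_\ell})\le 3\,\#I_{j_\ell}$. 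Combining the covering with disjointness yields
\[
\#F\le\sum_{\ell=1}^m \#(3I_{j_\ell})\le 3\sum_{\ell=1}^m \#I_{j_\ell}< \frac{3}{\alpha}\sum_{\ell=1}^m\sum_{i\in I_{j_\ell}}|b(i)|\le \frac{3}{\alpha}\,\|b\|_{\ell^1(\mathbb{Z})},
\]
and taking the supremum over all finite $F\subset E_\alpha$ gives $\#E_\alpha\le 3\alpha^{-1}\|b\|_{\ell^1(\mathbb{Z})}$, which is (a) with $C=3$.

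For part (b), the case $p=\infty$ is the trivial bound already noted, $\|Mb\|_{\ell^\infty(\mathbb{Z})}\le\|b\|_{\ell^\infty(\mathbb{Z})}$. For $1<p<\infty$ I would invoke the Marcinkiewicz interpolation theorem: $M$ is sublinear (subadditive and positively homogeneous), it is of weak type $(1,1)$ by part (a) and of strong type $(\infty,\infty)$, hence of strong type $(p,p)$ with a constant $C_p$ depending only on $p$. If one prefers to avoid quoting interpolation, the same conclusion follows from the Calder\'on--Zygmund splitting $b=b\,\mathbf{1}_{\{|b|>\alpha/2\}}+b\,\mathbf{1}_{\{|b|\le\alpha/2\}}$ together with the layer-cake identity $\|Mb\|_{\ell^p(\mathbb{Z})}^p=p\int_0^\infty \alpha^{p-1}\,\#\{j:(Mb)(j)>\alpha\}\,d\alpha$, estimating the first piece by part (a) and absorbing the second via $M\big(b\,\mathbf{1}_{\{|b|\le\alpha/2\}}\big)\le\alpha/2$.

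The only genuinely delicate point is the covering step in part (a): one must be careful that $E_\alpha$ is not yet known to be finite, which is why I pass through arbitrary finite subfamilies before taking the supremum, and one must check that the discrete tripling of an interval of cardinality $2N+1$ has cardinality $6N+1\le 3(2N+1)$ so that the constant is controlled uniformly. Everything else is routine once the measure is specialized to the counting measure, for which the engine of the general homogeneous-type theory reduces to the classical Vitali lemma on $\mathbb{Z}$.
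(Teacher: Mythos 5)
Your proof is correct; the paper itself omits the proof of this theorem, merely citing the general theory of maximal functions on spaces of homogeneous type applied to $(\mathbb{Z},\mu,|\cdot|)$, and your Vitali covering argument for the weak $(1,1)$ bound followed by Marcinkiewicz interpolation against the trivial $\ell^\infty$ estimate is exactly the concrete specialization of that machinery. The discrete details (the tripling bound $6N+1\le 3(2N+1)$ and the passage through finite subfamilies of $E_\alpha$) are handled correctly.
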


We observe that if $0 < p \leq \infty$ and $0 < q \leq 1$, then the mapping $b \to Mb$ is not bounded from $\ell^{p}(\mathbb{Z})$ into 
$\ell^{q}(\mathbb{Z})$. Indeed, taking $b = \{ b(i) \}$ such that $b(0) =1 $ and $b(i)=0$ for all $i \neq 0$ we have 
$(Mb)(j)= \frac{1}{2|j|+1}$ for all $j \in \mathbb{Z}$. So $b \in \ell^{p}(\mathbb{Z})$ but $Mb \notin \ell^{q}(\mathbb{Z})$ when 
$0 < p \leq \infty$ and $0 < q \leq 1$. We know that if $0 < p < q \leq \infty$, then $\ell^{p}(\mathbb{Z}) \subset \ell^{q}(\mathbb{Z})$ with $\| b \|_q \leq \| b \|_p$ for all $b \in \ell^{p}(\mathbb{Z})$. From this and Theorem \ref{maximal}-(b), we have that if $1 < q \leq \infty$ and $0 < p \leq q$, then $\| Mb \|_{\ell^{q}} \leq C_q \| b \|_{\ell^{p}}$. We also have that for $0 < q < p \leq \infty$ the inclusion $\ell^{q}(\mathbb{Z}) \subset \ell^{p}(\mathbb{Z})$ is strict and since $|b(j)| \leq (Mb)(j)$ for all $j$, it follows that the maximal operator $M$ is not bounded from $\ell^{p}(\mathbb{Z})$ into $\ell^{q}(\mathbb{Z})$ if $0 < q < p \leq \infty$. In the following proposition, we summarize all this.

\begin{proposition} Let $0 < p, \, q \leq \infty$. Then the maximal operator $M$ is bounded from $\ell^{p}(\mathbb{Z})$ into 
$\ell^{q}(\mathbb{Z})$ if and only if
\[
\left( 1/p, 1/q \right) \in \left\{ (x,y) \in [0,1] \times [0,1) : y \leq x \right\} \cup [1, \infty) \times [0,1).
\]
\end{proposition}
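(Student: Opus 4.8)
The plan is to first recast the region geometrically. Writing $x=1/p$ and $y=1/q$, one checks that the set displayed in the statement is precisely $\{(x,y): x\ge 0,\ 0\le y<1,\ y\le x\}$: the piece $[1,\infty)\times[0,1)$ automatically satisfies $y\le x$ (since there $x\ge 1>y$), so its union with $\{(x,y)\in[0,1]\times[0,1): y\le x\}$ is the full region $\{0\le y<1,\ y\le x\}$. In terms of $p$ and $q$ this reads $q>1$ and $p\le q$. Hence it suffices to prove the cleaner statement: $M$ is bounded from $\ell^p(\mathbb{Z})$ into $\ell^q(\mathbb{Z})$ if and only if $q>1$ and $p\le q$.

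For the sufficiency, I would assume $q>1$ and $p\le q$. Since $q>1$, Theorem \ref{maximal}(b) gives $\|Mb\|_{\ell^q}\le C_q\|b\|_{\ell^q}$, and since $p\le q$ the nesting inequality recalled just before the statement yields $\|b\|_{\ell^q}\le\|b\|_{\ell^p}$. Chaining these produces $\|Mb\|_{\ell^q}\le C_q\|b\|_{\ell^p}$ for every $b\in\ell^p(\mathbb{Z})$. This one computation covers uniformly both the subregion $p\le 1$ and the subregion $1<p\le q$, and includes the endpoint $q=\infty$, where $\|Mb\|_{\ell^\infty}\le\|b\|_{\ell^\infty}$ is immediate.

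For the necessity I would rule out each way the condition can fail, reusing the two examples already indicated in the text. If $q\le 1$, take $b$ with $b(0)=1$ and $b(i)=0$ otherwise; then $(Mb)(j)=(2|j|+1)^{-1}$ belongs to no $\ell^q$ with $q\le 1$, although $b\in\ell^p(\mathbb{Z})$ for every $p$, so $M$ cannot be bounded into $\ell^q$. If instead $p>q$ (which forces $q<\infty$), I would exploit the pointwise bound $|b(j)|\le(Mb)(j)$: boundedness of $M$ from $\ell^p$ into $\ell^q$ would force the inclusion $\ell^p(\mathbb{Z})\hookrightarrow\ell^q(\mathbb{Z})$ to be bounded. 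Testing on the indicator $b_N$ of $\{1,\dots,N\}$ gives $\|Mb_N\|_{\ell^q}\ge\|b_N\|_{\ell^q}=N^{1/q}$ while $\|b_N\|_{\ell^p}=N^{1/p}$, and the ratio $N^{1/q-1/p}\to\infty$ because $q<p$; this contradiction finishes the necessity.

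The mathematical content here is light, since everything rests on Theorem \ref{maximal}; the points that actually require care are the faithful translation of the region into the $(1/p,1/q)$ plane, in particular checking the endpoints $p=1$, $p=\infty$, $q=\infty$, and verifying that the two test families between them exhaust the complement $\{q\le 1\}\cup\{p>q\}$ of the claimed region. The one genuine subtlety is the line $q=1$: there neither the $\ell^q\to\ell^q$ bound of Theorem \ref{maximal}(b) nor the nesting inequality is available, so the explicit point-mass counterexample, rather than any interpolation or embedding argument, is the right tool to exclude it.
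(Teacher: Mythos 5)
Your proof is correct and follows essentially the same route as the paper: the region is exactly $\{q>1,\ p\le q\}$, sufficiency comes from Theorem \ref{maximal}(b) combined with the nesting $\|b\|_{\ell^{q}}\le\|b\|_{\ell^{p}}$, and necessity is ruled out by the point mass for $q\le 1$ and the pointwise bound $|b(j)|\le (Mb)(j)$ together with the failure of the embedding $\ell^{p}\hookrightarrow\ell^{q}$ for $p>q$. The only cosmetic difference is that you quantify the failure of that embedding with the indicators of $\{1,\dots,N\}$, whereas the paper simply invokes the strictness of the inclusion $\ell^{q}(\mathbb{Z})\subset\ell^{p}(\mathbb{Z})$.
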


\

{\bf Discrete atoms.} Let $0 < p \leq 1 \leq q \leq \infty$, $p < q$ and $d$ be a non negative integer. A discrete $(p,q,d)$-atom centered at $n_0 \in \mathbb{Z}$ is a sequence $a= \{ a(i) \}_{i \in \mathbb{Z}}$ satisfying the conditions:
\\
\begin{enumerate}
\item $\supp (a) \subseteq \{n_0 - m, ..., n_0, ..., n_0 + m \}$, $m \geq 1$,
\\
\item $\|a\|_{\ell^{q}} \leq (2m+1)^{1/q - 1/p}$,
\\
\item $\displaystyle{\sum_{i=-\infty}^{+\infty}} i^{j} a(i) = 0$ for $j=0, 1, ..., d$.
\end{enumerate}
Here, $(1)$ means that the support of an atom is finite, $(2)$ is the size condition of the atom, and $(3)$ is called the \textit{cancellation moment condition}. Clearly, a $(p, \infty, d)$-atom is a $(p,q,d)$-atom, if $0 < p < q < \infty$. If $a$ is a $(p,q,d)$-atom,
then $\| a \|_{\ell^{p}} \leq 1$.

\

The atomic decomposition for members in $H^{p}(\mathbb{Z})$, $0 < p \leq 1$, developed in \cite{Boza} is as follows:

\begin{theorem} $($Theorem 3.13 in \cite{Boza}$)$ \label{decomp atomic} Let $0 < p \leq 1$, $d_p = \lfloor p^{-1} - 1 \rfloor$ and 
$b \in H^{p}(\mathbb{Z})$. Then there exist a sequence of $(p, \infty, d_p)$-atoms $\{ a_k \}_{k=0}^{+\infty}$, a sequence of scalars $\{ \lambda_k \}_{k=0}^{+\infty}$ and a positive constant $C$, which depends only on $p$, with $\sum_{k=0}^{+\infty} |\lambda_k |^{p} \leq C \| b \|_{H^{p}(\mathbb{Z})}^{p}$ such that $b = \sum_{k=0}^{+\infty} \lambda_k a_k$, where the series converges in $H^{p}(\mathbb{Z})$.
\end{theorem}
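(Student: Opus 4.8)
The plan is to obtain the decomposition by a discrete Calder\'on--Zygmund stopping-time argument applied to a maximal-function description of the quasinorm (\ref{norm Hp}). The essential preliminary input --- itself one of the equivalent norm characterizations established by Boza and Carro --- is that $b \in H^{p}(\mathbb{Z})$ precisely when a grand (equivalently, nontangential) maximal function $b^{\#}$, built from the discrete Poisson and conjugate-Poisson kernels, lies in $\ell^{p}(\mathbb{Z})$, with $\| b^{\#} \|_{\ell^{p}} \approx \| b \|_{H^{p}}$. I would take this equivalence as the point of departure; its proof is the discrete avatar of the Burkholder--Gundy--Silverstein and Fefferman--Stein real-variable theory, and it is exactly where the weak-$(1,1)$ and strong-$(p,p)$ bounds for $M$ from Theorem~\ref{maximal}, together with the space-of-homogeneous-type structure of $(\mathbb{Z}, \mu, |\cdot|)$, are used.

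Granting this, the first real step is a Whitney/stopping-time decomposition. For $k \in \mathbb{Z}$ put $\Omega_k = \{ j \in \mathbb{Z} : b^{\#}(j) > 2^{k} \}$; since $\sum_j b^{\#}(j)^{p} < \infty$, each $\Omega_k$ is a finite set, hence a finite union of maximal runs of consecutive integers. I would subdivide these runs into Whitney intervals $I_{k,\ell}$ whose lengths are comparable to their distance to $\mathbb{Z} \setminus \Omega_k$ and attach to them a finitely supported partition of unity $\{ \varphi_{k,\ell} \}$ subordinate to a fixed dilate of the $I_{k,\ell}$, the dilates having bounded overlap. Then, from the telescoping identity $b = \sum_k (g_k - g_{k+1})$ with $g_k$ the \emph{good part} of $b$ at height $2^{k}$, I would group the increments into localized pieces $b_{k,\ell} = \varphi_{k,\ell}(g_{k}-g_{k+1})$ and subtract from each the unique polynomial of degree at most $d_p = \lfloor p^{-1}-1 \rfloor$ (in the inner product weighted by $\varphi_{k,\ell}$) killing its moments of order $0, \ldots, d_p$, thereby forcing cancellation condition $(3)$. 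Setting $\lambda_{k,\ell} = C\, 2^{k}(2m_{k,\ell}+1)^{1/p}$, with $2m_{k,\ell}+1$ the length of the dilate of $I_{k,\ell}$, and $a_{k,\ell} = \lambda_{k,\ell}^{-1} b_{k,\ell}$, one checks support condition $(1)$ and the size bound $(2)$ in the form $\| a_{k,\ell} \|_{\ell^{\infty}} \leq (2m_{k,\ell}+1)^{-1/p}$, the $\ell^{\infty}$ estimate following from the pointwise bound $|g_k - g_{k+1}| \leq C\, 2^{k}$ on $\Omega_k$ supplied by the stopping time.

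The remaining step is the bookkeeping and convergence. Using bounded overlap and the Chebyshev estimate $\# \Omega_k \leq 2^{-kp}\| b^{\#} \|_{\ell^{p}}^{p}$, I would bound $\sum_{k,\ell} |\lambda_{k,\ell}|^{p} \leq C \sum_k 2^{kp}\, \# \Omega_k \leq C\, \| b^{\#} \|_{\ell^{p}}^{p} \leq C\, \| b \|_{H^{p}(\mathbb{Z})}^{p}$, and then relabel the doubly indexed families as single sequences $\{ a_n \}$, $\{ \lambda_n \}$. Convergence of $\sum_n \lambda_n a_n$ to $b$ in $H^{p}(\mathbb{Z})$ follows because each atom satisfies $\| a_n \|_{H^{p}} \leq C$ (an atom maps under $H$ to a molecule of controlled $\ell^{p}$ norm, thanks to the vanishing moments of order $d_p$), so the partial sums are Cauchy in the complete quasinormed space $H^{p}(\mathbb{Z})$, and the limit is identified with $b$ via the embedding $H^{p}(\mathbb{Z}) \subset \ell^{2}(\mathbb{Z})$ and pointwise convergence.

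The main obstacle is the maximal-function equivalence invoked at the outset: converting control of the conjugate sequence $Hb$ in $\ell^{p}$ into control of a maximal function in $\ell^{p}$ for $p \leq 1$ is the genuine analytic content, since $H$ is nonlocal and the subharmonicity arguments of the continuous theory must be replayed for the discrete Poisson kernel. Once that characterization is in hand, the Whitney decomposition and the moment-cancellation bookkeeping are standard, the only delicate point being the matching of the moment order $d_p = \lfloor p^{-1}-1 \rfloor$ to the decay required for the atoms to be uniformly bounded in $H^{p}(\mathbb{Z})$.
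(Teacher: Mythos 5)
First, a point of comparison that is really a point of non-comparison: the paper does not prove this statement. It is imported verbatim as Theorem 3.13 of Boza and Carro \cite{Boza}, and the only trace of its proof in the present text is the reference to their Lemma 3.12 in the proof of Corollary \ref{dense}. In the source the decomposition is obtained from the equivalent characterizations of $H^{p}(\mathbb{Z})$ that Boza and Carro establish first (maximal-function and transference to $H^{p}(\mathbb{R})$), not by an intrinsic discrete Calder\'on--Zygmund construction. Your outline is therefore the natural alternative route --- the Fefferman--Stein/Latter argument replayed on the homogeneous-type space $(\mathbb{Z},\mu,|\cdot|)$ --- and at the level of strategy it is sound.

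As a proof, however, there is a genuine gap at the moment-correction step, in addition to the two large pieces you explicitly defer (the equivalence $\|b^{\#}\|_{\ell^{p}}\approx\|b\|_{H^{p}(\mathbb{Z})}$, which is where most of Boza--Carro's work lies, and the uniform bound $\|a\|_{H^{p}(\mathbb{Z})}\le C$ for $(p,\infty,d_p)$-atoms, which is essentially the molecular theory of \cite{Kanjin} and needs an estimate on $Ha$ of the same kind as the one carried out for $T_{\alpha,\beta}$ in Theorem \ref{Hpq}). You set $b_{k,\ell}=\varphi_{k,\ell}(g_k-g_{k+1})$, which does satisfy $\sum_{k,\ell}b_{k,\ell}=b$, and then \emph{subtract from each piece} the polynomial killing its moments up to order $d_p$. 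That subtraction destroys the identity $\sum_{k,\ell}b_{k,\ell}=b$ unless the subtracted terms themselves sum to zero, which they do not. In Latter's construction this is exactly the delicate point: the good part $g_k$ must be defined via polynomial projections of $b$ on the Whitney pieces of $\Omega_k$ (you never define $g_k$), and the localized increments acquire their vanishing moments only after one adds a second family of correction terms indexed by pairs of Whitney intervals at consecutive generations $k$ and $k+1$; one then has to check that these corrected pieces still have the right supports, the $\ell^{\infty}$ bound $C2^{k}$, and bounded overlap, using $\sum_{\ell}\varphi_{k,\ell}=1$ on $\Omega_{k+1}\subseteq\Omega_k$. Without that step the sequences you produce are atoms but their sum is not $b$, so the proposal as written does not prove the theorem; with it, the rest of your bookkeeping (the Chebyshev bound $\#\Omega_k\le 2^{-kp}\|b^{\#}\|_{\ell^p}^p$, the estimate $\sum|\lambda_{k,\ell}|^p\le C\|b\|_{H^p}^p$, and the identification of the $H^p$-limit with $b$ through the embedding into $\ell^{1}(\mathbb{Z})$) is standard and correct.
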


\begin{corollary} \label{moment cond} If $\{ b(i) \}_{i=-\infty}^{+\infty} \in H^{p}(\mathbb{Z})$, $0 < p \leq 1$, then $\sum_{i=-\infty}^{+\infty} b(i) =0$.
\end{corollary}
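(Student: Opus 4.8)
The plan is to read the conclusion directly off the atomic decomposition of Theorem \ref{decomp atomic}. Applying that theorem to $b \in H^{p}(\mathbb{Z})$ produces $(p,\infty,d_{p})$-atoms $a_{k}$ and scalars $\lambda_{k}$ with $b = \sum_{k=0}^{\infty}\lambda_{k}a_{k}$, the series converging in $H^{p}(\mathbb{Z})$. Since $0 < p \leq 1$ we have $d_{p} = \lfloor p^{-1}-1\rfloor \geq 0$, so the cancellation condition $(3)$ holds at least for $j=0$; that is, every atom satisfies $\sum_{i}a_{k}(i)=0$. The underlying idea is that the total-sum functional annihilates each atom, and therefore annihilates $b$ once we know it may be passed through the limit.

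To make this precise I would first upgrade the mode of convergence. By the definition of the $H^{p}$ ``norm'' in \eqref{norm Hp} one has $\|c\|_{\ell^{p}} \leq \|c\|_{H^{p}}$ for every sequence $c$, so convergence in $H^{p}(\mathbb{Z})$ forces convergence in $\ell^{p}(\mathbb{Z})$. Because $0 < p \leq 1$, the elementary nesting inequality $\|c\|_{\ell^{1}} \leq \|c\|_{\ell^{p}}$ holds, and hence the partial sums $S_{N}=\sum_{k=0}^{N}\lambda_{k}a_{k}$ in fact converge to $b$ in $\ell^{1}(\mathbb{Z})$. In particular $b \in \ell^{1}(\mathbb{Z})$, so the series $\sum_{i}b(i)$ converges absolutely and the quantity in question is well defined.

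Finally, the linear functional $L(c)=\sum_{i\in\mathbb{Z}}c(i)$ satisfies $|L(c)| \leq \|c\|_{\ell^{1}}$ and is therefore continuous on $\ell^{1}(\mathbb{Z})$. Consequently $L(b)=\lim_{N}L(S_{N})=\lim_{N}\sum_{k=0}^{N}\lambda_{k}\,L(a_{k})=0$, since $L(a_{k})=0$ for all $k$ by the zeroth-moment condition, which yields $\sum_{i}b(i)=0$.

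The only genuinely delicate point is the passage to the limit, namely justifying that $H^{p}$-convergence of the atomic series yields $\ell^{1}$-convergence and that the total-sum functional may be interchanged with the limit. Once the chain $\|\cdot\|_{\ell^{1}}\leq\|\cdot\|_{\ell^{p}}\leq\|\cdot\|_{H^{p}}$, valid for $0<p\leq 1$, is in place this interchange is immediate, and the remaining steps are purely formal.
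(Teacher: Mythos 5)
Your argument is correct and follows essentially the same route as the paper: both pass from $H^{p}$-convergence of the atomic series to $\ell^{1}$-convergence via the embeddings $H^{p}(\mathbb{Z})\subset\ell^{p}(\mathbb{Z})\subset\ell^{1}(\mathbb{Z})$ and then use the vanishing zeroth moment of each atom. The only cosmetic difference is that you phrase the final step as continuity of the summation functional on $\ell^{1}(\mathbb{Z})$, whereas the paper writes out the corresponding triangle-inequality estimate explicitly before letting $N\to+\infty$.
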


\begin{proof} By Theorem \ref{decomp atomic}, given $b \in H^{p}(\mathbb{Z})$, $0 < p \leq 1$, we can write $b = \sum_{k=0}^{+\infty} \lambda_k a_k$ where the $a_k$'s are $(p, \infty, d_p)$-atoms, and the series converges in $H^{p}(\mathbb{Z})$. Since $H^{p}(\mathbb{Z}) \subset \ell^{p}(\mathbb{Z}) \subset \ell^{1}(\mathbb{Z})$ embed continuously, the series also converges in $\ell^{1}(\mathbb{Z})$. Then for each $N$ fixed, by the cancellation moment condition of the atoms $a_k$, we have
\begin{equation} \label{ineq}
\left| \sum_{i=-\infty}^{+\infty} b(i) \right| = \left| \sum_{i=-\infty}^{+\infty} \left( b(i) - \sum_{k=0}^{N} \lambda_k a_k(i) \right)  \right| \leq \sum_{i=-\infty}^{+\infty} \left| b(i) - \sum_{k=0}^{N} \lambda_k a_k(i) \right|. 
\end{equation}
Finally, letting $N \to +\infty$ on the right-hand side of (\ref{ineq}), we obtain $\sum_{i=-\infty}^{+\infty} b(i) =0$.
\end{proof}

Given an integer $L \geq 0$, we define the set of sequences $\mathcal{D}_{L}$ by
\[
\mathcal{D}_{L} = \left\{ c = \{c(i)\}_{i \in \mathbb{Z}} : \# \supp(c) < +\infty \, \text{and} 
\sum_{i=-\infty}^{+\infty} i^{j} c(i)=0, \, \text{for} \,\, j= 0, 1, ..., L \right\}.
\]

\begin{corollary} \label{dense} If $0 < p \leq 1$ and $L$ is an arbitrary integer such that $L \geq \lfloor p^{-1} - 1 \rfloor$, then the set $\mathcal{D}_{L}$ is dense in $H^{r}(\mathbb{Z})$ for each $p \leq r \leq 1$.
\end{corollary}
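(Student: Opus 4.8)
The plan is to prove the statement through two reductions followed by one explicit construction. Fix $r$ with $p \le r \le 1$ and set $d_r = \lfloor r^{-1} - 1 \rfloor$. Since $r \ge p$ gives $r^{-1} \le p^{-1}$, we have $d_r \le \lfloor p^{-1} - 1 \rfloor \le L$, so every element of $\mathcal{D}_L$ is a scalar multiple of an $(r,\infty,d_r)$-atom and in particular lies in $H^r(\mathbb{Z})$; thus $\mathcal{D}_L \subseteq \mathcal{D}_{d_r} \subseteq H^r(\mathbb{Z})$. The first reduction is to observe that $\mathcal{D}_{d_r}$ is already dense in $H^r(\mathbb{Z})$: given $b \in H^r(\mathbb{Z})$, apply Theorem \ref{decomp atomic} (with $p$ replaced by $r$) to write $b = \sum_k \lambda_k a_k$ with convergence in $H^r(\mathbb{Z})$, where the $a_k$ are $(r,\infty,d_r)$-atoms. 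Each partial sum $S_N = \sum_{k \le N} \lambda_k a_k$ is finitely supported and, by linearity of the moment conditions, has vanishing moments up to order $d_r$, hence $S_N \in \mathcal{D}_{d_r}$ and $S_N \to b$. It therefore remains to approximate an arbitrary $c \in \mathcal{D}_{d_r}$ by elements of $\mathcal{D}_L$ in the $H^r$ quasi-norm.

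Second, I reduce this to a moment-killing construction. For $c \in \mathcal{D}_{d_r}$ write $\mu_j(c) = \sum_i i^j c(i)$; by hypothesis $\mu_j(c) = 0$ for $j \le d_r$, while $\mu_j(c)$ may be nonzero for $d_r < j \le L$. The goal is to produce a finitely supported correction $e$ with $\mu_j(e) = 0$ for $j \le d_r$, $\mu_j(e) = -\mu_j(c)$ for $d_r < j \le L$, and $\|e\|_{H^r(\mathbb{Z})}$ as small as we wish; then $c + e \in \mathcal{D}_L$, and since for $r \le 1$ the quantity $\|x\|_{\ell^r}^r + \|Hx\|_{\ell^r}^r$ is subadditive and comparable to $\|x\|_{H^r}^r$, we get $\|c - (c+e)\|_{H^r} = \|e\|_{H^r}$ arbitrarily small. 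Using the same subadditivity, it suffices to kill the moments one order at a time, from $j = d_r + 1$ up to $j = L$: at the step that treats order $j_0$ the correction block will have vanishing moments of all orders $< j_0$, so it does not disturb the already annihilated lower moments and alters only orders $\ge j_0$, which are handled at later steps (orders $> L$ being irrelevant).

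Third comes the building block. For $k \ge 1$ let $\Phi_k(l) = (-1)^l \binom{k}{l}$ for $0 \le l \le k$ and $0$ otherwise; the identity $\sum_l (-1)^l \binom{k}{l} l^j = 0$ for $0 \le j < k$ and $(-1)^k k!$ for $j = k$ shows $\Phi_k$ has vanishing moments of orders $0,\dots,k-1$ and nonzero moment of order $k$. To kill the order-$j_0$ moment I take the $\lambda$-dilate $(\Phi_{j_0})_\lambda$, supported on $\{0,\lambda,\dots,j_0\lambda\}$ with value $\Phi_{j_0}(t)$ at $\lambda t$, and a scalar $A$; dilation multiplies the order-$j$ moment by $\lambda^j$ and leaves the $\ell^r$ norm unchanged, so choosing $A$ of size $\sim \lambda^{-j_0}$ realizes the prescribed moment while forcing $\|A(\Phi_{j_0})_\lambda\|_{\ell^r} = O(\lambda^{-j_0}) \to 0$. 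The decisive point is to control $\|H(A(\Phi_{j_0})_\lambda)\|_{\ell^r}$. Because $(\Phi_{j_0})_\lambda$ has $j_0$ vanishing moments, a far-field expansion of the Hilbert kernel gives $|H((\Phi_{j_0})_\lambda)(n)| \lesssim \lambda^{j_0} |n|^{-(j_0+1)}$ for $|n| \gtrsim j_0\lambda$, while a near-field (resonance) analysis near the dilated support points $n \approx -\lambda t$ bounds the remaining contribution; since $j_0 \ge d_r + 1 > r^{-1} - 1$ we have $(j_0+1)r > 1$, and summing both contributions yields $\|H((\Phi_{j_0})_\lambda)\|_{\ell^r} = O(\lambda^{r^{-1}-1})$, so $\|H(A(\Phi_{j_0})_\lambda)\|_{\ell^r} = O(\lambda^{r^{-1}-1-j_0}) \to 0$.

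The main obstacle is precisely this last estimate on $\|H((\Phi_{j_0})_\lambda)\|_{\ell^r}$: for $r < 1$ the naive bound through the $\ell^2$-boundedness of $H$ is too lossy in the near field, and one must genuinely exploit the $j_0$ vanishing moments (for the far-field decay $|n|^{-(j_0+1)}$) together with a careful resonance count near the spread-out support, the inequality $(j_0+1)r > 1$ being exactly what makes both the far tail and the amplitude-rescaled near part vanish as $\lambda \to \infty$. Granting this, summing the finitely many single-order corrections produces the desired $e$, which completes the approximation of $c$ by $\mathcal{D}_L$ and hence the density of $\mathcal{D}_L$ in $H^r(\mathbb{Z})$.
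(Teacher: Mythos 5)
Your argument is correct in strategy but follows a genuinely different route from the paper. The paper's proof is a one-step appeal to the construction behind the atomic decomposition: re-examining Lemma 3.12 of Boza--Carro shows that any $b\in H^{r}(\mathbb{Z})$ admits a decomposition into atoms with vanishing moments up to any prescribed order $L\ge d_r$, so the partial sums already lie in $\mathcal{D}_{L}$ and density is immediate. You instead use the decomposition as a black box (your first reduction, giving density of $\mathcal{D}_{d_r}$ via partial sums) and then upgrade $\mathcal{D}_{d_r}$ to $\mathcal{D}_{L}$ by an explicit moment-killing correction built from dilated finite differences $(\Phi_{j_0})_\lambda$. What your route buys is self-containedness -- no need to reopen the Boza--Carro construction -- and a transparent mechanism: spreading the support by $\lambda$ makes the amplitude needed to realize a fixed order-$j_0$ moment of size $\lambda^{-j_0}$, and one wins because $j_0>r^{-1}-1$. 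What it costs is that the whole burden lands on the estimate $\|H((\Phi_{j_0})_\lambda)\|_{\ell^{r}}=O(\lambda^{1/r-1})$, which you only sketch; this is the one place your proof is not yet complete. The estimate is true and your sketch is the right one (the far field uses the $j_0$ vanishing moments to get $|H((\Phi_{j_0})_\lambda)(n)|\lesssim\lambda^{j_0}|n|^{-(j_0+1)}$; near each point $-\lambda t$ a single resonant term dominates, since the off-resonant terms are $O(\lambda^{-1})$, and contributes $\sum_{|m|\lesssim\lambda}|m+\tfrac12|^{-r}=O(\lambda^{1-r})$ to the $r$-th power of the norm), but it must be written out, and at $r=1$ the resonant sum is $O(\log\lambda)$ rather than $O(1)$, so the correct bound there is $O(\log\lambda)$; the conclusion $\|A(\Phi_{j_0})_\lambda\|_{H^{1}}=O(\lambda^{-j_0}\log\lambda)\to0$ survives. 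One further small point to record in the iteration: killing the order-$j_0$ moment with a block of dilation $\lambda$ perturbs the moments of orders $j>j_0$ by amounts of size $\lambda^{j-j_0}$, so the dilation parameters must be chosen successively, each depending on the preceding corrections; this causes no difficulty but should be said.
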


\begin{proof} By checking the proof of Lemma 3.12 of Boza and Carro in \cite{Boza}, we see that in the atomic decomposition of an arbitrary element in 
$H^{r}(\mathbb{Z})$ one can always choose atoms with additional vanishing moments. This is, if $L$ is any fixed integer with $L \geq d_{r}$ and 
$b \in H^{r}(\mathbb{Z})$, then there exists an atomic decomposition for $b$ such that all moments up to order $L$ of the atoms are zero. For 
$0 < p \leq r \leq 1$ we have that $d_p \geq d_r$. If $L$ is any fixed integer such that $L \geq d_p$, then 
$\textit{span} \{ (r, \infty, L) - \text{atoms} \} \subset \mathcal{D}_{L} \subset H^{p}(\mathbb{Z}) \subset H^{r}(\mathbb{Z})$ for 
$p \leq r \leq 1$, so the corollary follows.
\end{proof}

\begin{proposition}\label{partialder} Let $0 \leq \gamma < 1$ and $\alpha, \beta >0$ such that $\alpha + \beta = 1 - \gamma$, and let $K : \mathbb{R}^{2} \setminus \{ y = \pm x \} \to \mathbb{R}$ be the function given by
\[
K(x,y) = |x-y|^{-\alpha} |x+y|^{-\beta}.
\]
Then
\[
\left| \frac{\partial^{N}}{\partial x^{N}} K(x,y) \right| + \left| \frac{\partial^{N}}{\partial y^{N}} K(x,y) \right| \leq C \, |x-y|^{-\alpha} |x+y|^{-\beta} \left( |x-y|^{-1} + |x+y|^{-1} \right)^{N},
\]
for every $N \in \mathbb{N}$, where $C$ is a positive constant independent of $x, y$.
\end{proposition}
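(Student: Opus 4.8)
The plan is to reduce the estimate to one-variable calculus by passing to the coordinates $u = x - y$ and $v = x + y$, in which the kernel factors. Write $K(x,y) = f(u)\, g(v)$ with $f(u) = |u|^{-\alpha}$ and $g(v) = |v|^{-\beta}$. The decisive observation is that $u$ and $v$ are affine in $x$: since $\partial u / \partial x = \partial v / \partial x = 1$ and all higher $x$-derivatives of $u$ and $v$ vanish, the chain rule collapses to $\partial_{x}^{k}[f(u)] = f^{(k)}(u)$ and $\partial_{x}^{m}[g(v)] = g^{(m)}(v)$. Applying the Leibniz product rule to $f(u)g(v)$ then yields
\[
\frac{\partial^{N}}{\partial x^{N}} K(x,y) = \sum_{k=0}^{N} \binom{N}{k} f^{(k)}(u) \, g^{(N-k)}(v).
\]
For the $y$-derivative the only difference is $\partial u / \partial y = -1$ (while $\partial v / \partial y = 1$), which merely inserts a factor $(-1)^{k}$ in the $k$-th term; since we estimate in absolute value this is irrelevant, and the same expansion holds.

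It remains to bound the one-variable derivatives. A direct computation, differentiating the power $|u|^{-\alpha}$ away from the origin separately for $u > 0$ and $u < 0$, gives
\[
|f^{(k)}(u)| = \alpha(\alpha + 1) \cdots (\alpha + k - 1)\, |u|^{-\alpha - k} =: C_{k}\, |u|^{-\alpha - k},
\]
and likewise $|g^{(m)}(v)| \leq C_{m}\, |v|^{-\beta - m}$, where the constants depend only on $\alpha, k$ and $\beta, m$ respectively (for $k=0$ the product is empty and equals $1$).

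Substituting these into the Leibniz expansion, factoring out $|u|^{-\alpha} |v|^{-\beta}$, and bounding the product $C_{k} C_{N-k}$ by its maximum $C_{N}$ over $0 \leq k \leq N$, I am left with
\[
\left| \frac{\partial^{N}}{\partial x^{N}} K(x,y) \right| \leq C_{N}\, |u|^{-\alpha} |v|^{-\beta} \sum_{k=0}^{N} \binom{N}{k} |u|^{-k} |v|^{-(N-k)} = C_{N}\, |u|^{-\alpha} |v|^{-\beta} \left( |u|^{-1} + |v|^{-1} \right)^{N},
\]
the last equality being the binomial theorem. Recalling $u = x - y$ and $v = x + y$ gives exactly the claimed bound for the $x$-derivative, and the computation for the $y$-derivative is identical. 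There is no serious obstacle here; the only points requiring care are the sign bookkeeping when differentiating $|u|^{-\alpha}$ across the two half-lines and the uniformity of the constants, both of which are routine.
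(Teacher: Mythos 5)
Your proof is correct and complete, but it takes a different route from the paper: the paper does not prove the proposition at all, it simply observes that the estimate is a special case of Lemma 1 in \cite{Ro2}, which treats the general multi-factor kernel $|x-A_1y|^{-\alpha_1}\cdots|x-A_my|^{-\alpha_m}$ on $\mathbb{R}^n$ with invertible matrices $A_j$. Your argument exploits the very special structure of the present two-factor, one-dimensional kernel --- it splits as $f(x-y)\,g(x+y)$ with $f(u)=|u|^{-\alpha}$, $g(v)=|v|^{-\beta}$ --- so that the Leibniz rule plus the elementary bound $|f^{(k)}(u)|=\alpha(\alpha+1)\cdots(\alpha+k-1)\,|u|^{-\alpha-k}$ and the binomial theorem finish the job. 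What you gain is a short, self-contained verification with explicit constants $C=C(N,\alpha,\beta)$; what the citation buys the author is generality (the same lemma covers the operators in (\ref{Talfa})) at the cost of opacity for the reader of this particular paper. All the steps you flag as needing care (the two half-lines for $|u|^{-\alpha}$, the factor $(-1)^k$ from $\partial u/\partial y=-1$, uniformity of constants) are handled correctly, and the identity $\sum_{k}\binom{N}{k}|u|^{-k}|v|^{-(N-k)}=\left(|u|^{-1}+|v|^{-1}\right)^{N}$ is exactly the right way to produce the stated right-hand side.
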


\begin{proof} This result is a particular case of Lemma 1 in \cite{Ro2}.
\end{proof}

\section{Main Results}

Let $0 \leq \gamma < 1$, $0 < p < \gamma^{-1}$ and $\frac{1}{q}=\frac{1}{p} - \gamma$. In this section we study the 
$H^{p}(\mathbb{Z})$ - $\ell^{q}(\mathbb{Z})$ boundedness of the operator $T_{\alpha, \beta}$ defined by
\begin{equation} \label{Tgama}
(T_{\alpha, \beta} \, b)(j) = \sum_{i \neq \pm j} \frac{b(i)}{|i-j|^{\alpha} |i+j|^{\beta}} \,\,\,\,\,\, (j \in \mathbb{Z}),
\end{equation}
where $\alpha, \beta > 0$ and $\alpha + \beta = 1 - \gamma$.

\begin{theorem} \label{lpq} Let $0 \leq \gamma < 1$. If $T_{\alpha, \beta}$ is the operator given by $(\ref{Tgama})$, $1 < p < \gamma^{-1}$ and $\frac{1}{q} = \frac{1}{p} - \gamma$, then
\[
\| T_{\alpha, \beta} \, b \|_{\ell^{q}(\mathbb{Z})} \leq C \| b \|_{\ell^{p}(\mathbb{Z})},
\]
where $C$ depends only on $\alpha$, $\beta$, $p$ and $q$.
\end{theorem}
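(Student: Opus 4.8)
The plan is to exploit the positivity of the kernel $K(i,j)=|i-j|^{-\alpha}|i+j|^{-\beta}\ge 0$ together with a pointwise comparison to the discrete Riesz potential $I_\gamma$. Since the kernel is nonnegative, $|(T_{\alpha,\beta}b)(j)|\le (T_{\alpha,\beta}|b|)(j)$, so I would first reduce to nonnegative sequences. The elementary ingredient is a ``min-bound'': if $|i-j|\le|i+j|$ then $|i+j|^{-\beta}\le|i-j|^{-\beta}$ and hence $K(i,j)\le|i-j|^{-(\alpha+\beta)}=|i-j|^{-(1-\gamma)}$, while in the opposite case $K(i,j)\le|i+j|^{-(1-\gamma)}$; in either case
\[
\frac{1}{|i-j|^{\alpha}|i+j|^{\beta}}\le\frac{1}{|i-j|^{1-\gamma}}+\frac{1}{|i+j|^{1-\gamma}}.
\]
Note this uses the hypothesis $\alpha+\beta=1-\gamma$ only through the exponent sum, which is exactly what makes the reduction robust and independent of the particular splitting.

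For $0<\gamma<1$ I would then dominate $T_{\alpha,\beta}$ by two copies of $I_\gamma$. Dropping the (nonnegative) term with $i=-j$ gives $\sum_{i\ne\pm j}|b(i)|\,|i-j|^{-(1-\gamma)}\le (I_\gamma|b|)(j)$, and the substitution $i\mapsto-i$ turns the second sum into $(I_\gamma\check b)(j)$, where $\check b(i)=|b(-i)|$ is the reflected sequence. Since reflection is an $\ell^{p}$-isometry, and since $I_\gamma$ is bounded from $\ell^{p}(\mathbb{Z})$ into $\ell^{q}(\mathbb{Z})$ for $1<p<\gamma^{-1}$, $1/q=1/p-\gamma$, by the Hardy--Littlewood--P\'olya inequality (\ref{HLP ineq}) with $\lambda=1-\gamma$, I would conclude
\[
\|T_{\alpha,\beta}b\|_{\ell^{q}}\le\|I_\gamma|b|\|_{\ell^{q}}+\|I_\gamma\check b\|_{\ell^{q}}\le C\|b\|_{\ell^{p}}.
\]

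The delicate point---and the step I expect to be the main obstacle---is the endpoint $\gamma=0$ (so $q=p$ and $\alpha+\beta=1$). Here the min-bound is too lossy: the dominating operator has kernel $|i-j|^{-1}$, which is bounded on no $\ell^{p}$ (testing on $\mathbf 1_{\{1,\dots,N\}}$ produces a factor $\log N$), so the comparison to $I_\gamma$ breaks down precisely where (\ref{HLP ineq}) requires $\lambda<1$. In this case I would keep the full kernel and use that it is homogeneous of degree $-1$, the decay of $|i+j|^{-\beta}$ compensating the diagonal singularity of $|i-j|^{-\alpha}$. Using the symmetry $K(-i,-j)=K(i,j)$ and splitting the $i$-sum according to the sign of $i$ (the terms $i=0$ and $j=0$ being handled separately by H\"older, since $p'>1$) reduces matters to a Hilbert-type operator on $\mathbb{N}$ with a kernel homogeneous of degree $-1$. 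This I would settle by a Schur test with the power weight $h(i)=|i|^{-1/(pp')}$, whose verification reduces to the convergence of $\int_0^\infty|u-1|^{-\alpha}(u+1)^{-\beta}u^{-1/p}\,du$, finite because $\alpha,\beta<1$ and $1<p<\infty$. This is the classical Hardy--Littlewood--P\'olya criterion for bilinear forms with a degree $-1$ kernel, i.e. the same circle of ideas as the original inequality (\ref{Hilbert}).
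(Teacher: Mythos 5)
Your proposal is correct. For $0<\gamma<1$ it coincides with the paper's argument: the pointwise domination $|(T_{\alpha,\beta}b)(j)|\le (I_{\gamma}|b|)(j)+(I_{\gamma}|b|)(-j)$ (your reflected sequence $\check b$ is just the paper's evaluation at $-j$), followed by the Hardy--Littlewood--P\'olya bound for $I_{\gamma}$. Where you genuinely diverge is the endpoint $\gamma=0$, and you correctly identify it as the only delicate case. The paper handles it by proving a weak-type $(p,p)$ bound for each $1\le p<\infty$: it splits $\mathbb{Z}\setminus\{\pm j_0\}$ into the two regions near $\pm j_0$ and the far region $|i|>2|j_0|$, controls the near regions by the centered maximal function $Mb$ at $\pm j_0$ via a dyadic decomposition (using $\alpha<1$, $\beta<1$ to sum the geometric series), controls the far region by H\"older, and then invokes Marcinkiewicz interpolation. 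You instead keep the full degree $-1$ homogeneous kernel, reduce by sign symmetry to kernels of the form $|i-j|^{-a}(i+j)^{-b}$ on $\mathbb{N}\times\mathbb{N}$ with $a+b=1$ (note the roles of $\alpha$ and $\beta$ swap on the cross-sign blocks), and run a Schur test with the weight $i^{-1/(pp')}$; the test conditions reduce to the convergence of $\int_0^\infty|u-1|^{-\alpha}(u+1)^{-\beta}u^{-1/p}\,du$ and of the dual integral with $u^{-1/p'}$, both finite since $\alpha,\beta<1$ and $1<p<\infty$. One small caveat: the kernel is not monotone in each variable (it peaks at $i=j$), so you cannot literally quote the series form of the Hardy--Littlewood--P\'olya homogeneous-kernel theorem; you should verify the discrete Schur sums directly by splitting into $i\le j/2$, $j/2<i<2j$, $i\ge 2j$, which works precisely because $\alpha<1$. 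Your route avoids interpolation and the maximal function entirely and yields strong-type bounds directly with an explicit constant $A^{1/p'}B^{1/p}$; the paper's route is softer but leans on Theorem \ref{maximal} and Marcinkiewicz. Both are sound.
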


\begin{proof} Given a sequence $b = \{ b(i) \}_{i \in \mathbb{Z}}$ we put $|b| = \{ |b(i)| \}_{i \in \mathbb{Z}}$. We study the cases 
$0 < \gamma < 1$ and $\gamma = 0$ separately. For $0 < \gamma < 1$ it is easy to check that

\[
|(T_{\alpha, \beta} \, b)(j)| \leq (I_{\gamma}|b|)(j) + (I_{\gamma}|b|)(-j), \,\,\,\, \forall j \in \mathbb{Z}.
\]
\\
So, the $\ell^{p}(\mathbb{Z})$ - $\ell^{q}(\mathbb{Z})$ boundedness of $T_{\alpha, \beta}$ ($0 < \gamma < 1$) follows from the boundedness of the discrete Riesz potential $I_{\gamma}$.

For the case $\gamma = 0$, we introduce the auxiliary operator $\widetilde{T}_{\alpha, \beta}$ defined by 
$(\widetilde{T}_{\alpha, \beta} \, b)(j) = (T_{\alpha, \beta} \, b)(j)$ if $j \neq 0$ and $(\widetilde{T}_{\alpha, \beta} \, b)(0)=0$. Since 
\[
|(T_{\alpha, \beta} \, b)(0)| \leq 2^{1/p'} \| \{ i^{-1} \} \|_{\ell^{p'}(\mathbb{N})} \| b \|_{\ell^{p}(\mathbb{Z})} < +\infty, \, \forall \, 1 \leq p < \infty,
\]
it suffices to show that $\widetilde{T}_{\alpha, \beta}$ is bounded on $\ell^{p}(\mathbb{Z})$, $1 < p < +\infty$. We will show that the operator $\widetilde{T}_{\alpha, \beta}$ is bounded from $\ell^{p}(\mathbb{Z})$ into $\ell^{p, \infty}(\mathbb{Z})$ for each 
$1 \leq p <  \infty$. Then the $\ell^{p}(\mathbb{Z})$ boundedness of $\widetilde{T}_{\alpha, \beta}$ will follow from the Marcinkiewicz interpolation theorem (see Theorem 1.3.2 in \cite{Grafakos}) and, with it, that of $T_{\alpha, \beta}$.

Given $j_0 \neq 0$ fixed, we write $\mathbb{Z} \setminus \{ -j_0, j_0 \} = I_1 \cup I_2 \cup I_3$ where
$$I_1 = \{ i \in \mathbb{Z} : 0 < |i - j_0 | \leq |j_0| \}, \,\,\,\,\,\, I_2 = \{ i \in \mathbb{Z} : 0 < |i + j_0 | \leq |j_0| \},
\,\, \text{and} \,\,\,\,\,\, I_3 = \{ i \in \mathbb{Z} : |i| > 2|j_0| \}.$$
Then
\[
|(\widetilde{T}_{\alpha, \beta} \, b)(j_0)|=|(T_{\alpha, \beta} \, b)(j_0)| \leq \left( \sum_{i \in I_1} + \sum_{i \in I_2} + 
\sum_{i \in I_3} \right) \frac{|b(i)|}{|i-j_0|^{\alpha} |i+j_0|^{\beta}}.
\]
First, we estimate the sum on $I_1$. If $i \in I_1$, then $|i + j_0| = |2j_0 +i - j_0| \geq |j_0|$. So
\[
\sum_{i \in I_1} \frac{|b(i)|}{|i-j_0|^{\alpha} |i+j_0|^{\beta}} \leq \frac{1}{|j_0|^{\beta}} \sum_{0 < |i-j_0| \leq |j_0|} 
\frac{|b(i)|}{|i-j_0|^{\alpha}} =:S_1.
\]
Now, we take $k_0 \in \mathbb{N}_0$ such that $2^{k_0} \leq |j_0| < 2^{k_0 + 1}$, thus
\[
S_1 = \sum_{k=0}^{k_0} \frac{1}{|j_0|^{\beta}} \sum_{2^{-(k+1)}|j_0| < |i-j_0| \leq 2^{-k}|j_0|} \frac{|b(i)|}{|i-j_0|^{\alpha}}
\leq \sum_{k=0}^{k_0} \frac{2^{(k+1)\alpha}}{|j_0|} \sum_{|i-j_0| \leq \lfloor 2^{-k}|j_0| \rfloor} |b(i)|
\]
\[
= 2^{1+\alpha} \sum_{k=0}^{k_0} \frac{2^{-(1-\alpha)k}}{2 \cdot 2^{-k}|j_0|} \sum_{|i-j_0| \leq \lfloor 2^{-k}|j_0| \rfloor} |b(i)|
\]
\[
\leq 2^{2+\alpha} \sum_{k=0}^{k_0} 2^{-(1-\alpha)k} \frac{1}{2 \cdot \lfloor 2^{-k}|j_0| \rfloor + 1} \sum_{|i-j_0| \leq \lfloor 2^{-k}|j_0| \rfloor} |b(i)|,
\]
this last inequality follows from that $\lfloor 2^{-k} |j_0| \rfloor \leq 2^{-k} |j_0|$ and that
$\frac{2 \cdot \lfloor 2^{-k} |j_0| \rfloor + 1}{2 \cdot \lfloor 2^{-k} |j_0| \rfloor} \leq 2$ for each $k=0, ..., k_0$. Thus
\begin{equation} \label{s1}
\sum_{i \in I_1} \frac{|b(i)|}{|i-j_0|^{\alpha} |i+j_0|^{\beta}} \leq 2^{2+\alpha} \left( \sum_{k=0}^{+\infty} 2^{-(1-\alpha)k} \right)
(Mb)(j_0).
\end{equation}
Similarly, it is seen that
\begin{equation} \label{s2}
\sum_{i \in I_2} \frac{|b(i)|}{|i-j_0|^{\alpha} |i+j_0|^{\beta}} \leq C_{\beta} (Mb)(-j_0).
\end{equation}
Now, we estimate the last sum. If $i \in I_3$, then $|i \pm j_0| > \frac{|i|}{2}$, so
\begin{equation} \label{estim}
\sum_{i \in I_3} \frac{|b(i)|}{|i-j_0|^{\alpha} |i+j_0|^{\beta}} \leq C \sum_{|i|> 2 |j_0|} |i|^{-1} |b(i)| \leq 
C \| b \|_{\ell^{p}} |j_0|^{-1/p},
\end{equation}
the last inequality follows from the H\"older's inequality. Thus (\ref{estim}) implies that
\begin{equation} \label{s3}
\# \left\{ j \neq 0: \left| \sum_{i \in I_3} |i-j|^{-\alpha}|i+j|^{-\beta} b(i) \right| > \lambda \right\} \leq 
\left(C \frac{\|b\|_{\ell^{p}}}{\lambda} \right)^{p}, \,\, 1 \leq p < \infty.
\end{equation}
Finally, (\ref{s1}), (\ref{s2}), (\ref{s3}) and Theorem \ref{maximal} allow us to conclude that the operator $\widetilde{T}_{\alpha, \beta}$ is bounded from $\ell^{p}(\mathbb{Z})$ into $\ell^{p, \infty}(\mathbb{Z})$, for every $1 \leq p < \infty$. This completes the proof.
\end{proof}

\begin{remark} Let $0 \leq \gamma < 1$. Then the operator $T_{\alpha, \beta}$ is not bounded from $\ell^{p}(\mathbb{Z})$ into 
$\ell^{q}(\mathbb{Z})$ for $0 < p \leq \infty$ and $0 < q \leq \frac{1}{1 - \gamma}$. Indeed, by taking $b = \{ b(i) \}$ such that 
$b(0) =1$ and $b(i)=0$ for all $i \neq 0$ we have that $(T_{\gamma}b)(0) = 0$ and $(T_{\alpha, \beta} \, b)(j) = |j|^{\gamma - 1}$ 
for all $j \neq 0$. So, $b \in \ell^{p}(\mathbb{Z})$ but $T_{\alpha, \beta} \, b \notin \ell^{q}(\mathbb{Z})$ for $0 < p \leq \infty$ and 
$0 < q \leq \frac{1}{1 - \gamma}$. 

For $0 \leq \gamma < 1$, one also can see that $T_{\alpha, \beta}$ is not bounded from $\ell^{p}(\mathbb{Z})$ into 
$\ell^{q}(\mathbb{Z})$ for $\frac{1}{\gamma} \leq p \leq \infty$ and $0 < q \leq \infty$. For them, to consider $b = \{ b(i) \}$ with
$b(i)=0$ for $|i| \leq 1$, and $b(i) = \frac{1}{|i|^{\gamma} \log(|i|)}$ for $|i| \geq 2$. It is easy to check that
$b \in \ell^{p}(\mathbb{Z})$ for each $\frac{1}{\gamma} \leq p \leq \infty$, and $(T_{\alpha, \beta} \, b)(j) = +\infty$ for all $j$.

Let $0 \leq \gamma < 1$. If $\frac{1}{1-\gamma} < q < \infty$ and $0 < p \leq \frac{q}{1 + q \gamma}$, then the operator 
$T_{\alpha, \beta}$ is bounded from $\ell^{p}(\mathbb{Z})$ into $\ell^{q}(\mathbb{Z})$. This follows from Theorem \ref{lpq} and 
the embedding $\ell^{p_1}(\mathbb{Z}) \hookrightarrow \ell^{p_2}(\mathbb{Z})$ valid for $0 < p_1 < p_2 \leq \infty$.
\end{remark}

\begin{theorem} \label{Hpq} Let $0 \leq \gamma < 1$. If $T_{\alpha, \beta}$ is the operator given by $(\ref{Tgama})$, $0 < p \leq 1$ and 
$\frac{1}{q} = \frac{1}{p} - \gamma$, then
\[
\| T_{\alpha, \beta} \, b \|_{\ell^{q}(\mathbb{Z})} \leq C \| b \|_{H^{p}(\mathbb{Z})},
\]
where $C$ depends only on $\alpha$, $\beta$, $p$ and $q$.
\end{theorem}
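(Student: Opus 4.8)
The plan is to establish a uniform bound on atoms and then pass to a general $b \in H^{p}(\mathbb{Z})$ via Theorem \ref{decomp atomic}. Writing $b = \sum_{k} \lambda_k a_k$ with $(p,\infty,d_p)$-atoms $a_k$ and $\sum_k |\lambda_k|^{p} \le C\|b\|_{H^{p}}^{p}$, the series converges in $\ell^{1}(\mathbb{Z})$ (as $H^{p}\subset\ell^{p}\subset\ell^{1}$), so for each fixed $j$ one has $(T_{\alpha,\beta}b)(j)=\sum_k \lambda_k (T_{\alpha,\beta}a_k)(j)$, since $K(\cdot,j)=|{\cdot}-j|^{-\alpha}|{\cdot}+j|^{-\beta}$ is bounded for fixed $j$. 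Put $\rho=\min(1,q)$; as $\ell^{q}(\mathbb{Z})$ is a $\rho$-normed space, $\|T_{\alpha,\beta}b\|_{\ell^{q}}^{\rho}\le \sum_k |\lambda_k|^{\rho}\,\|T_{\alpha,\beta}a_k\|_{\ell^{q}}^{\rho}$. Since $p\le q$ (because $\tfrac1q=\tfrac1p-\gamma\le\tfrac1p$) and $p\le1$, we have $p\le\rho$, hence $\sum_k|\lambda_k|^{\rho}\le(\sum_k|\lambda_k|^{p})^{\rho/p}$. Thus a uniform estimate $\|T_{\alpha,\beta}a\|_{\ell^{q}}\le C$ over all atoms $a$ immediately yields $\|T_{\alpha,\beta}b\|_{\ell^{q}}\le C\|b\|_{H^{p}}$, and the whole proof reduces to that single atomic estimate.

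So fix a $(p,\infty,d_p)$-atom $a$ supported in $B=\{n_0-m,\dots,n_0+m\}$ with $\|a\|_{\ell^{\infty}}\le(2m+1)^{-1/p}$ and $\sum_i i^{\ell}a(i)=0$ for $\ell\le d_p$, and set $N=d_p+1$; note $N>\tfrac1p-1$. Because the kernel is singular in $j$ at the two points $\pm n_0$, I distinguish the case $|n_0|\ge 4m$ (the singular points are separated) from $|n_0|<4m$ (they merge), and within each I treat a \emph{near} part by the size condition and a \emph{far} part by the cancellation conditions. For the far part I Taylor-expand $K(\cdot,j)$ in the first variable about $n_0$ to order $d_p$; rewriting the moment conditions as $\sum_i(i-n_0)^{\ell}a(i)=0$ for $\ell\le d_p$ annihilates the polynomial, leaving $(T_{\alpha,\beta}a)(j)=\sum_i R(i,j)a(i)$ with an order-$N$ remainder. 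For $j$ away from $\pm n_0$ the segment $[n_0-m,n_0+m]$ avoids $\pm j$, and Proposition \ref{partialder} gives $|R(i,j)|\le C\,m^{N}|j-n_0|^{-\alpha}|j+n_0|^{-\beta}\bigl(|j-n_0|^{-1}+|j+n_0|^{-1}\bigr)^{N}$, so that $|(T_{\alpha,\beta}a)(j)|\le C(2m+1)^{1-1/p}m^{N}Q(j)$ where $Q(j)=|j-n_0|^{-\alpha}|j+n_0|^{-\beta}\min(|j-n_0|,|j+n_0|)^{-N}$.

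The delicate point, which I expect to be the main obstacle, is the far sum when $|n_0|\ge4m$: here I split into the annuli $\{2m<|j-n_0|\le|n_0|/2\}$ and $\{2m<|j+n_0|\le|n_0|/2\}$ (on each of which the other factor $|j\pm n_0|$ is comparable to $|n_0|$) together with the transitional/outer set $\{|j-n_0|>|n_0|/2,\ |j+n_0|>|n_0|/2\}$, further split at $|j|=2|n_0|$. Summing $Q(j)^{q}$ over each piece produces a power of $m$ times a power of $|n_0|$ whose exponents \emph{add to zero}, thanks to $\alpha+\beta=1-\gamma$ and $\tfrac1q=\tfrac1p-\gamma$; concretely the bound becomes $(m/|n_0|)^{A}$ with $A=q(N+1-\tfrac1p)\ge0$, which is $\le 1$ because $|n_0|\ge4m$. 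The annular sums converge precisely because $((1-\gamma)+N)q>1$, i.e. $N>\tfrac1p-1$. When $|n_0|<4m$ the far region is simply $\{|j|>10m\}$, on which $|j\pm n_0|\approx|j|$ and $Q(j)\approx|j|^{-(1-\gamma)-N}$, and the identical exponent cancellation yields a uniform constant.

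For the near part, if $|n_0|\ge 4m$ then on $\{|j-n_0|\le 2m\}$ one checks $|i+j|\ge|n_0|$, so $|(T_{\alpha,\beta}a)(j)|\le C|n_0|^{-\beta}\|a\|_{\ell^{\infty}}\sum_{i\in B}|i-j|^{-\alpha}\le C|n_0|^{-\beta}(2m+1)^{-1/p}m^{1-\alpha}$; summing the $q$-th power over the $O(m)$ points and using $|n_0|\ge4m$ gives $m$-exponent $1+q(\gamma-\tfrac1p)=0$, and the annulus about $-n_0$ is symmetric under $\alpha\leftrightarrow\beta$. If $|n_0|<4m$, then on $\{|j|\le 10m\}$ the support lies in $[-5m,5m]$ and comparing $\sum_{i\in B,\,i\ne\pm j}|i-j|^{-\alpha}|i+j|^{-\beta}$ with $\int_{-Cm}^{Cm}|x-j|^{-\alpha}|x+j|^{-\beta}\,dx\approx m^{\gamma}$ gives $|(T_{\alpha,\beta}a)(j)|\le C(2m+1)^{-1/p}m^{\gamma}$, whence again $\sum_{|j|\le10m}$ of the $q$-th power has $m$-exponent $1-q(\tfrac1p-\gamma)=0$. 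Collecting the near and far estimates in both cases yields $\|T_{\alpha,\beta}a\|_{\ell^{q}}\le C$ uniformly, completing the proof.
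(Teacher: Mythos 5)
Your proof is correct and follows the same overall strategy as the paper: reduce to a uniform $\ell^{q}$ bound on $(p,\infty,d_p)$-atoms via Theorem \ref{decomp atomic} and the $\rho$-norm inequality, then handle the far region (both $|j-n_0|$ and $|j+n_0|$ large compared to $m$) by Taylor-expanding $K(\cdot,j)$ about $n_0$ to degree $d_p$, killing the polynomial with the moment conditions, and controlling the remainder with Proposition \ref{partialder}. The differences are in the two remaining ingredients. For the near region the paper avoids any case analysis on $|n_0|$ versus $m$: it applies H\"older's inequality with exponent $q_0/q>1$ over the set $3J_{n_0}\cup 3J_{-n_0}$ of cardinality $O(m)$ and invokes the $\ell^{p_0}$--$\ell^{q_0}$ boundedness of $T_{\alpha,\beta}$ from Theorem \ref{lpq} together with the size condition $\|a\|_{\ell^{p_0}}\le(\#J_{n_0})^{1/p_0-1/p}$; your direct kernel computation (splitting $|n_0|\ge 4m$ from $|n_0|<4m$) is more hands-on but self-contained, and your justification of the pointwise identity $(T_{\alpha,\beta}b)(j)=\sum_k\lambda_k(T_{\alpha,\beta}a_k)(j)$ via $\ell^1$ convergence and $K(i,j)\le 1$ is actually simpler than the paper's route through $\ell^{p_0}$. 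For the far region the paper's decomposition is coarser and cleaner: on $R_1=\{|j-n_0|\le|j+n_0|\}$ it simply bounds $|j+n_0|^{-\beta}\le|j-n_0|^{-\beta}$ to get $|K(i,j)-q_N(i,j)|\le C|i-n_0|^N|j-n_0|^{-(1-\gamma)-N}$ and sums a single series, converging because $((1-\gamma)+N)q>1$; your annular decomposition reaches the same conclusion but at the cost of extra bookkeeping, and your parenthetical claim that the \emph{annular} sums over $\{2m<|j-n_0|\le|n_0|/2\}$ converge because $((1-\gamma)+N)q>1$ is slightly off --- there the governing exponent is $(\alpha+N)q$, which can be $\le 1$ (e.g.\ $\gamma=0$, $p=q=0.45$, $N=2$, $\alpha$ small), so the sum is dominated by its outer edge $|n_0|/2$ rather than its inner edge $2m$; this does not break the argument, since the resulting power of $|n_0|$ is still negative enough that the net bound is $O(1)$ after using $|n_0|\ge 4m$, exactly as in your exponent bookkeeping, but the paper's $R_1/R_2$ split sidesteps the issue entirely.
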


\begin{proof} We take $p_0$ such that $1 < p_0 < \gamma^{-1}$. By Theorem \ref{decomp atomic}, given $b \in H^{p}(\mathbb{Z})$ we can write 
$b = \sum_k \lambda_k a_k$ where the $a_k$'s are $(p, \infty, d_p)$ atoms, the scalars $\lambda_k$ satisfies $\sum_{k} |\lambda_k |^{p} \leq C \| b \|_{H^{p}(\mathbb{Z})}^{p}$  and the series converges in $H^{p}(\mathbb{Z})$ and so in $\ell^{p_0}(\mathbb{Z})$ since
$H^{p}(\mathbb{Z}) \subset \ell^{p}(\mathbb{Z}) \subset \ell^{p_0}(\mathbb{Z})$ embed continuously. 
For $\frac{1}{q_0} = \frac{1}{p_0} - \gamma$, by Theorem \ref{lpq}, $T_{\alpha, \beta}$ is a bounded operator from 
$\ell^{p_0}(\mathbb{Z})$ into $\ell^{q_0}(\mathbb{Z})$. Since $b = \sum_k \lambda_k a_k$ in $\ell^{p_0}(\mathbb{Z})$, we have that 
$(T_{\alpha, \beta} \, b)(j) = \sum_{k} \lambda_k (T_{\alpha, \beta} \, a_k)(j)$ for all $j \in \mathbb{Z}$, and thus
\begin{equation}
|(T_{\alpha, \beta} \, b)(j)| \leq \sum_{k} |\lambda_k| |(T_{\alpha, \beta} \, a_k)(j)|, \,\,\,\, \forall \, j \in \mathbb{Z}. \label{puntual}
\end{equation}
If for $0 < p \leq 1$ and $\frac{1}{q}= \frac{1}{p} - \gamma$ we obtain that $\|T_{\alpha, \beta} \, a_k \|_{\ell^{q}(\mathbb{Z})} \leq C$, with $C$ independent of the $(p, \infty, d_p)$-atom $a_k$, then the estimate (\ref{puntual}) and the fact that 
$\sum_{k} |\lambda_k |^{p} \leq C \| b \|_{H^{p}(\mathbb{Z})}^{p}$ lead to 
\[
\|T_{\alpha, \beta} \, b \|_{\ell^{q}(\mathbb{Z})} \leq C \left( \sum_{k} |\lambda_k|^{\min\{1, q \}} \right)^{\frac{1}{\min\{1, q \}}} \leq C \left( \sum_{k} |\lambda_k |^{p} \right)^{1/p} \leq C\| b \|_{H^{p}(\mathbb{Z})}.
\] 
Since $b$ is an arbitrary element of $H^{p}(\mathbb{Z})$, the theorem follows.

To conclude the proof we will prove that for $0 < p \leq 1$ and $\frac{1}{q}= \frac{1}{p} - \gamma$ there exists an universal constant 
$C > 0$, which depends on $\alpha$, $\beta$, $p$ and $q$ only, such that 

\begin{equation} \label{uniform estimate}
\|T_{\alpha, \beta} \, a \|_{\ell^{q}(\mathbb{Z})} \leq C, \,\,\,\, \textit{for all} \,\, (p, \infty, d_p) - \textit{atom} \,\, a=\{ a(i) \}. 
\end{equation}
\\
To prove (\ref{uniform estimate}), let $J_{n_0}= \{ n_0 - m, ..., n_0, ..., n_0 + m \}$ be the support of the atom $a = \{a(i)\}$. We put
$3J_{\pm n_0} = \{ \pm n_0 - 3m, ..., \pm n_0, ..., \pm n_0 + 3m \}$. So

\begin{equation} \label{sum2}
\sum_{j \in \mathbb{Z}} |(T_{\alpha, \beta} \, a)(j)|^{q} = \sum_{j \in 3J_{n_0} \cup \, 3J_{-n_0}} |(T_{\alpha, \beta} \, a)(j)|^{q} + 
\sum_{j \in \mathbb{Z} \setminus (3J_{n_0} \cup \, 3J_{-n_0})} |(T_{\alpha, \beta} \, a)(j)|^{q}.
\end{equation}

To estimate the first sum, by taking into account that $\frac{q_0}{q} > 1$ we apply H\"older's inequality with that $\frac{q_0}{q}$, then from the $\ell^{p_0}(\mathbb{Z})$ - $\ell^{q_0}(\mathbb{Z})$ boundedness of $T_{\alpha, \beta}$, the size condition of the atom and since 
$\frac{1}{p} - \frac{1}{q} = \frac{1}{p_0} - \frac{1}{q_0} = \gamma$ we have
\begin{equation} \label{estim C}
\sum_{j \in 3J_{n_0} \cup \, 3J_{-n_0}}|(T_{\alpha, \beta} \, a)(j)|^{q} \leq 2 \cdot 3^{(q_0 - q)/q_0} \left( \sum_{j \in \mathbb{Z}}
|(T_{\alpha, \beta} \, a)(j)|^{q_0}\right)^{q/q_0} \cdot (\#J_{n_0})^{(q_0-q)/q_0}
\end{equation}
\[
\leq C \left( \sum_{j \in J_{n_0}}|a (j)|^{p_0}\right)^{q/p_0} \cdot (\#J_{n_0})^{(q_0-q)/q_0}
\]
\[
\leq C \, (\#J_{n_0})^{-q/p} \cdot (\#J_{n_0})^{q/p_0} \cdot (\#J_{n_0})^{(q_0-q)/q_0} = C,
\]
\\
with $C$ independent of $n_0$ and $m$.

To estimate the second sum in (\ref{sum2}), we denote $K(x,y) = |x - y|^{-\alpha} |x + y|^{-\beta},$ and we put 
$N - 1 = \lfloor p^{-1} - 1 \rfloor$. In view of the moment condition of $a$ we have, for 
$j \in \mathbb{Z} \setminus (3J_{n_0} \cup \, 3J_{-n_0})$, that

\[
(T_{\alpha, \beta} \, a)(j) = \sum_{i \in J_{n_0}} K(i,j) \, a(i) = \sum_{i \in J_{n_0}} [K(i,j) - q_{N}(i,j)] \, a(i),
\]
where $q_{N}(\, \cdot \,, j)$ is the degree $N - 1$ Taylor polynomial of the function $x \rightarrow K(x,j)$ expanded around $n_0$. 
By the standard estimate of the remainder term in the Taylor expansion there exists $\xi $ between $i$ and $n_0$ such that
\[
\left\vert K(i,j)-q_{N}\left( i,j\right) \right\vert \leq C \left\vert
i - n_0\right\vert ^{N} \left\vert \frac{\partial ^{N}}{\partial x^{N}}K(\xi, j)\right\vert
\]
\[
\leq C \left\vert i- n_0\right\vert ^{N} |j - \xi|^{-\alpha} |j + \xi|^{-\beta} \left( |j - \xi|^{-1} + |j + \xi|^{-1} \right)^{N},
\]
\\
where Proposition \ref{partialder} gives the last inequality. For $j \in \mathbb{Z} \setminus \left(3J_{n_0} \cup \, 3J_{-n_0}\right)$ we have $| j \pm n_0| \geq 2m$, since $\xi \in [n_0-m, n_0+m]$, it follows that $|\xi - n_0| \leq m  \leq \frac{1}{2} |j \pm n_0|$. So 
\[
|j \pm \xi|=|j \pm n_0 \mp n_0 \pm \xi| \geq |j \pm n_0| - | n_0 - \xi| \geq \frac{|j \pm n_0|}{2}
\] 
and then
\begin{equation} \label{estim K}
| K(i,j) - q_{N}(i, j) | \leq C \, | i- n_0 |^{N} |j - n_0|^{-\alpha} |j + n_0|^{-\beta} \left( |j - n_0|^{-1} + |j + n_0|^{-1} \right)^{N},
\end{equation}
\\
for every $j \in \mathbb{Z} \setminus \left(3J_{n_0} \cup \, 3J_{-n_0}\right)$ and $i \in J_{n_0}$. Now we decompose the set 
$R := \mathbb{Z} \setminus \left(3J_{n_0} \cup \, 3J_{-n_0}\right)$ by $R = R_{1} \cup R_{2}$ where
\[
R_{1} = \{j \in R : |n_0 - j| \leq |n_0 + j| \} \,\,\,\,\,\, \text{and} \,\,\,\,\,\, R_{2} = \{j \in R : |n_0 + j| < |n_0 - j| \}.
\]
\\
If $j \in R$, then $j \in R_{k}$ for some $k=1, 2$. From this, (\ref{estim K}) and since $\alpha + \beta = 1 - \gamma$, we obtain that

\[
| K(i,j) - q_{N}(i, j) | \leq C \, |i - n_0|^{N} |j + (-1)^{k} n_0|^{-(1-\gamma)-N}, 
\]
\\
for all $i \in J_{n_0}$ and all $j \in R_{k}$. This inequality gives
\begin{equation} \label{estim K2}
\sum_{j \in R} |(T_{\alpha, \beta} \, a)(j)|^{q} = \sum_{k=1}^{2} \sum_{j \in R_k} |(T_{\alpha, \beta} \, a)(j)|^{q}
\end{equation}
\[
=\sum_{k=1}^{2} \sum\limits_{j \in R_k}\left\vert \sum\limits_{i \in J_{n_0}}K(i,j) \, a(i)\right\vert^{q} = \sum_{k=1}^{2}\sum\limits_{j \in R_k}\left\vert \sum\limits_{i \in J_{n_0}} [K(i,j)- q_{N}(i,j)] \, a(i) \right\vert^{q}  
\]
\\
\[
\leq C  \left( \sum\limits_{i \in J_{n_0}}\left\vert i- n_0 \right\vert ^{N}\left\vert
a(i)\right\vert\right)^{q} \sum_{k=1}^{2}\sum\limits_{j \in \mathbb{Z} \setminus 3J_{(-1)^{k+1}n_0}}|j + (-1)^{k} n_0|^{-(1-\gamma)q-Nq}
\]

\[
\leq C \, m^{qN- \frac{q}{p}+q} \int_{m}^{\infty }t^{-q\left((1-\gamma)+N\right)}dt\leq C
\]
with $C$ independent of the $p-$atom $a$, since $-q\left((1-\gamma)+N\right)+1<0$. Finally, the inequality in (\ref{uniform estimate}) follows from (\ref{estim C}) and (\ref{estim K2}).
\end{proof}

\begin{remark}
Let $0 \leq \gamma < 1$. If $0 < q \leq \frac{1}{1-\gamma}$ and $0 < p \leq \frac{q}{1 + q \gamma}$, then the operator $T_{\alpha, \beta}$ is bounded from $H^{p}(\mathbb{Z})$ into $\ell^{q}(\mathbb{Z})$. This follows from Theorem \ref{Hpq} and the embedding 
$H^{p_1}(\mathbb{Z}) \hookrightarrow H^{p_2}(\mathbb{Z})$ valid for $0 < p_1 < p_2 \leq 1$.
\end{remark}

\begin{remark} \label{Igamma}
The argument utilized in the proof of Theorem \ref{Hpq} works for the discrete Riesz operator $I_{\gamma}$ as well. So, if $0 < \gamma < 1$,
$0 < q \leq \frac{1}{1-\gamma}$ and $0 < p \leq \frac{q}{1 + q \gamma}$, then the operator $I_{\gamma}$ is bounded from $H^{p}(\mathbb{Z})$ into $\ell^{q}(\mathbb{Z})$.
\end{remark}

\section{A Counter-example}

For $0 \leq \gamma < 1$, we consider the operator $U_{\gamma}$ given by
\[
(U_{\gamma}b)(j) = \sum_{i \neq \pm j} \frac{b(i)}{|i-j|^{\frac{1-\gamma}{2}} |i+j|^{\frac{1-\gamma}{2}}}.
\]
It is clear that $U_{\gamma} = T_{\frac{1-\gamma}{2}, \frac{1-\gamma}{2}}$. In this section, we will prove that there exists $\epsilon \in \left(0, \frac{1}{3} \right)$ such that, for every 
$0 \leq \gamma < \epsilon$, the operator $U_{\gamma}$ is not bounded from $H^{p}(\mathbb{Z})$ into $H^{q}(\mathbb{Z})$ 
for $0 < p \leq (1+\gamma)^{-1}$ and $\frac{1}{q} = \frac{1}{p} - \gamma$.

Let $b = \{ b(i) \}_{i \in \mathbb{Z}}$ be the sequence defined by $b(\pm 1) = 1$, $b(0)=-2$ and $b(i) = 0$ for all $i \neq -1, 0, 1$.
It is clear that $b$ satisfies the first moment condition, then $b \in H^{p}(\mathbb{Z})$ for every $\frac{1}{2} < p \leq 1$. In particular,
$b \in H^{(1+\gamma)^{-1}}(\mathbb{Z})$ for all $0 \leq \gamma < 1$. A computation gives
\[
(U_{\gamma}b)(-1) = b(0) = -2, \,\,\,\,\,\, (U_{\gamma}b)(0) = b(-1) + b(1) = 2, \,\,\,\,\,\, (U_{\gamma}b)(1) = b(0) = -2,
\]
\[
(U_{\gamma}b)(j) = \frac{2}{|j^{2}-1|^{(1-\gamma)/2}} - \frac{2}{|j|^{1-\gamma}}, \,\, \text{for all} \,\, j \neq 0, \pm 1.
\]
So
\[
\sum_{j=-\infty}^{+\infty} (U_{\gamma}b)(j) = -2 + 4 \sum_{j=2}^{+\infty} \left[ \frac{1}{(j^{2}-1)^{(1-\gamma)/2}} - \frac{1}{j^{1-\gamma}} \right].
\]

Next, we introduce two auxiliary functions. Let $g : [0,1) \to (0, +\infty)$ be the function defined by $g(\gamma) = \frac{1}{3^{\frac{1-\gamma}{2}}} - 
\frac{1}{2^{1-\gamma}}$, and let $h : \left[0, \frac{1}{3}\right) \to (0, +\infty)$ be given by 
$h(\gamma) = \frac{1}{2} - \frac{1}{8^{\frac{1-\gamma}{2}}}$. Since 
$g(0) = \frac{1}{\sqrt{3}} - \frac{1}{2} < \frac{1}{2} - \frac{1}{\sqrt{8}} = h(0)$, by the continuity of $g$ and $h$, there exists 
$\epsilon \in \left(0, \frac{1}{3} \right)$ such that $g(\gamma) < h(\gamma)$ for all $0 \leq \gamma < \epsilon$.

On the other hand, for $0 \leq \gamma < 1$, it is easy to check that 
$$(j+1)^{\gamma -1} < ((j+1)^{2}-1)^{(\gamma -1)/2} < j^{\gamma -1} < (j^{2}-1)^{(\gamma -1)/2}$$ for all $j \geq 2$ and so
\[
\bigcup_{j=3}^{+\infty} \left[ \frac{1}{j^{1-\gamma}}, \frac{1}{(j^{2}-1)^{(1-\gamma)/2}} \right] \subseteq 
\left(0, \frac{1}{8^{(1-\gamma)/2}} \right],
\]
being the previous union disjoint. We obtain
\[
\sum_{j=2}^{+\infty} \left[ \frac{1}{(j^{2}-1)^{(1-\gamma)/2}} - \frac{1}{j^{1-\gamma}} \right] = 
\sum_{j=3}^{+\infty} \left[ \frac{1}{(j^{2}-1)^{(1-\gamma)/2}} - \frac{1}{j^{1-\gamma}} \right] + g(\gamma)
\]
\[
\leq \frac{1}{8^{(1-\gamma)/2}} + g(\gamma) < \frac{1}{8^{(1-\gamma)/2}} + h(\gamma) = \frac{1}{2}, \,\,\, \text{for every} \,\,\, 
0 \leq \gamma < \epsilon,
\]
this implies
\[
\sum_{j=-\infty}^{+\infty} (U_{\gamma}b)(j) = -2 + 4 \sum_{j=2}^{+\infty} \left[ \frac{1}{(j^{2}-1)^{(1-\gamma)/2}} - \frac{1}{j^{1-\gamma}} \right] < 0, \,\,\, \text{for every} \,\,\, 0 \leq \gamma < \epsilon.
\]
Thus, by Corollary \ref{moment cond}, it follows that $U_{\gamma}$ is not bounded from $H^{(1+\gamma)^{-1}}(\mathbb{Z})$ into 
$H^{1}(\mathbb{Z})$ for every $0 \leq \gamma < \epsilon$.

For $0 < p < (1 + \gamma)^{-1}$, we take $L$ as any fixed integer with 
$L \geq \lfloor p^{-1}-1 \rfloor$, then by Corollary \ref{dense} the set $\mathcal{D}_{L}$ is dense in $H^{r}(\mathbb{Z})$ for each 
$p \leq r \leq 1$. In particular, there exists $c \in H^{p}(\mathbb{Z})$ such that $\| b - c \|_{H^{(1 + \gamma)^{-1}}(\mathbb{Z})} < \left| \sum_{j=-\infty}^{+\infty} (U_{\gamma} b)(j) \right|/2 \|U_{\gamma}\|_{H^{(1+\gamma)^{-1}} \to \ell^{1}}$. Then
\[
\left| \sum_{j=-\infty}^{+\infty} (U_{\gamma} c)(j)dx \right| \geq \left| \sum_{j=-\infty}^{+\infty} (U_{\gamma} b)(j) \right| - 
\sum_{j=-\infty}^{+\infty} \left| (U_{\gamma} (b-c))(j) \right|
\]
\[
\geq \left| \sum_{j=-\infty}^{+\infty} (U_{\gamma} b)(j) \right| -  \|U_{\gamma}\|_{H^{(1+\gamma)^{-1}} \to \ell^{1}} \| b - c \|_{H^{(1 + \gamma)^{-1}}(\mathbb{Z})} 
\]
\[
> \frac{1}{2} \left| \sum_{j=-\infty}^{+\infty} (U_{\gamma} b)(j) \right| > 0
\]
where the second inequality follows from Theorem \ref{Hpq} with $p = (1 + \gamma)^{-1}$ and $q=1$. But then, by Corollary \ref{moment cond}, for every $0 \leq \gamma < \epsilon$ the operator $U_{\gamma}$ is not bounded from $H^{p}(\mathbb{Z})$ into $H^{q}(\mathbb{Z})$ for each
$0 < p < (1 + \gamma)^{-1}$ and $\frac{1}{q}= \frac{1}{p} - \gamma$, since $\sum_{j=-\infty}^{+\infty} (U_{\gamma} c)(j) \neq 0$.

\

{\bf Acknowledgements.} I express my thanks to the referee for the useful comments and suggestions which helped me to improve the original manuscript.

\

\end{document}